\documentclass[11pt]{article}
\usepackage[T1]{fontenc}
\usepackage{lmodern}
\usepackage{amsmath,amssymb,amsthm,mathtools}
\usepackage[margin=1in]{geometry}
\usepackage{microtype,xcolor}
\usepackage{tikz}
\usetikzlibrary{arrows.meta,positioning,calc,patterns,decorations.pathreplacing}
\usepackage{needspace}
\usepackage[section]{placeins}
\definecolor{diagBlue}{HTML}{245B8A}
\definecolor{diagOrange}{HTML}{B96422}
\definecolor{diagGreen}{HTML}{337761}
\definecolor{diagGray}{HTML}{606975}
\tikzset{>=Stealth, every picture/.style={line width=.65pt},
  proofbox/.style={draw=diagBlue,fill=diagBlue!5,rounded corners=2pt,
    align=center,inner sep=6pt,font=\small},
  proofarrow/.style={->,draw=diagBlue,line width=.8pt}}

\usepackage[colorlinks=true,linkcolor=blue!45!black,citecolor=blue!45!black,urlcolor=blue!45!black]{hyperref}
\hypersetup{pdftitle={On OpenAI's proof of the Nagata conjecture},pdfauthor={Jenia Tevelev},pdfsubject={Lecture notes, UMass Amherst; version 10},pdfkeywords={Nagata conjecture, plane curves, elliptic ruled surfaces, interpolation}}
\newcommand{\PP}{\mathbb P}
\newcommand{\CC}{\mathbb C}

\newcommand{\ZZ}{\mathbb Z}
\newcommand{\RR}{\mathbb R}
\newcommand{\OO}{\mathcal O}

\newcommand{\cI}{\mathcal I}

\newcommand{\cO}{\mathcal O}

\DeclareMathOperator{\Tot}{Tot}

\DeclareMathOperator{\length}{length}
\DeclareMathOperator{\ord}{ord}
\DeclareMathOperator{\Span}{Span}
\DeclareMathOperator{\rk}{rank}
\newtheorem{maintheorem}{Theorem}

\newtheorem{lemma}{Lemma}[section]
\newtheorem{proposition}[lemma]{Proposition}
\newtheorem{corollary}[lemma]{Corollary}

\theoremstyle{remark}

\theoremstyle{definition}
\newtheorem{definition}{Definition}
\newtheorem{example}[definition]{Example}

\numberwithin{equation}{section}
\allowdisplaybreaks[1]
\title{On OpenAI's proof of the Nagata conjecture}
\author{Jenia Tevelev}
\begin{document}
\maketitle

\begin{abstract}
These notes are an expanded account of my talk at UMass Amherst on OpenAI’s remarkably simple proof of the Nagata conjecture.
I hope they will be useful,
especially to non-experts. The standard reduction to the normal cone and the collision of fat points into one monomial scheme
reduce the problem to interpolation of monomial schemes on the normal
ruled surface of a smooth plane cubic. The novelty is that the study of the asymptotics of theta functions
as the cubic develops a node, together with interpolation duality, reduces
the problem further to interpolation on a triangle of lattice points, which is straightforward. 
The same
calculation determines the nef and pseudoeffective cones of a weighted
blow-up of $\PP^2$ at one point for very general local parameters when
the ratio of weights is greater than nine.
\end{abstract}

\section{Introduction}\label{sec:introduction}

Nagata's conjecture grows out of the
elementary \emph{Lagrange interpolation} problem. We work over~$\CC$.
Let $x_1,\ldots,x_r$ be distinct numbers.
We are looking for a polynomial $f(x)$ with
$f(x_i)=c_i$ for every~$i$, where
$c_1,\ldots,c_r$ are prescribed values.
There is a unique solution of degree at most $r-1$:
$$
f(x)=\sum\limits_{i=1}^r c_i\prod\limits_{t\ne i}\frac{x-x_t}{x_i-x_t}.
$$
The following lemma generalizes Lagrange interpolation.
It will be used in subsequent sections.

\begin{lemma}\label{lem:row-interpolation}
Let $S\subset\CC^2$ be a finite set of points lying on distinct horizontal lines
$y=\beta_0,\ldots,y=\beta_s$, with $N_i$ points on row $i$.
Number the rows so that $N_0\ge\cdots\ge N_s>0$. Set
\begin{equation}\label{eq:intro-row-space}
 I=\{(a,b)\in\ZZ_{\ge0}^2:0\le b\le s,\ 0\le a<N_b\},\quad
 V_I=\Span_{\CC}\{x^ay^b:(a,b)\in I\}.
\end{equation}
The set $I$ is a Young diagram. 
Its rows, counted from the bottom, have
lengths $N_0,\ldots,N_s$. 
For any prescribed values on $S$, there is a unique
polynomial $F(x,y)\in V_I$ taking those values. 
\end{lemma}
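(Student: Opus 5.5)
Since $|I|=N_0+\cdots+N_s=|S|$, the evaluation map $\mathrm{ev}\colon V_I\to\CC^S$ is a linear map between spaces of the same dimension. It therefore suffices to prove injectivity: a polynomial $F\in V_I$ vanishing on all of $S$ must be zero. Equivalently, one could construct interpolants directly, but the kernel argument is cleaner. I will write $F(x,y)=\sum_{b=0}^{s} f_b(x)\,y^b$ with $\deg f_b<N_b$, which is exactly the condition $F\in V_I$.

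The key step is an induction on $s$ that peels off the \emph{top} row, the one of smallest length $N_s$. Let $x_1,\dots,x_{N_s}$ be the $x$-coordinates of the points of $S$ on the line $y=\beta_s$. Rather than working with the monomial basis in $y$, I change basis to the Newton-type polynomials $p_b(y)=\prod_{j<b}(y-\beta_j)$. Since $p_b$ is monic of degree $b$, the span of $\{x^a p_b(y):(a,b)\in I\}$ equals $V_I$. This uses only that the Young-diagram shape is preserved by the triangular change $y^b\mapsto p_b(y)$, which holds because $x^a y^{b'}\in V_I$ whenever $b'\le b$ and $a<N_b\le N_{b'}$. So write $F=\sum_b g_b(x)p_b(y)$ with $\deg g_b<N_b$.

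Now restrict to the rows in order, starting from the bottom. On $y=\beta_0$ only $g_0$ survives, so $g_0$ vanishes at $N_0$ distinct points. Since $\deg g_0<N_0$, we get $g_0=0$. Next, on $y=\beta_1$ only $g_1(x)p_1(\beta_1)$ survives, and $p_1(\beta_1)\neq0$ because the $\beta$'s are distinct. So $g_1$ vanishes at $N_1$ points, and $g_1=0$. Continuing, on row $i$ the terms with $b<i$ are already zero and those with $b>i$ vanish since $p_b(\beta_i)=0$. Hence $g_i(x)p_i(\beta_i)$ vanishes at the $N_i$ points of row $i$, which forces $g_i=0$. Thus $F=0$.

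I expect no real obstacle. The one point needing care is checking that the basis change to $x^a p_b(y)$ stays inside $V_I$, and this is where the ordering $N_0\ge\cdots\ge N_s$ is used. If I wanted to avoid the Newton basis, I would instead do the direct induction: pass from $F$ to the quotient $(F-F|_{\ldots})/(y-\beta_0)$. The Newton-basis version packages that same induction into a single triangular argument.
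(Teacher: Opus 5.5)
Your argument is correct and is essentially the paper's proof. You use the same Newton basis $x^a\prod_{t<b}(y-\beta_t)$, whose membership in $V_I$ rests on $N_{b'}\ge N_b$ for $b'<b$, together with the same bottom-up, row-by-row triangular elimination. The paper instead runs this constructively, choosing each $h_i$ by Lagrange interpolation on row $i$, while you prove injectivity and invoke $|I|=|S|$, which is equivalent. One small point: your remark about an induction on $s$ that peels off the top row is vestigial, since the elimination you actually carry out starts at row $0$.
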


\begin{figure}[!htb]
\centering
\begin{minipage}[t]{.49\linewidth}
\centering
{\small\bfseries Points $S$}\par\smallskip
\begin{tikzpicture}[x=1cm,y=.78cm,font=\small,>=Stealth]
  \draw[->,diagGray!65] (0,-.35)--(4.8,-.35) node[right] {$x$};
  \draw[->,diagGray!65] (0,-.35)--(0,2.7) node[above] {$y$};
  \foreach \yy in {0,.95,2.05}
    \draw[diagGray!35,densely dashed] (.08,\yy)--(4.55,\yy);
  \node[left=3pt] at (0,.95) {$\beta_0$};
  \node[left=3pt] at (0,2.05) {$\beta_1$};
  \node[left=3pt] at (0,0) {$\beta_s$};
  \foreach \xx in {.28,.68,1.16,3.25,3.86,4.35}
    \fill[diagBlue] (\xx,.95) circle (2.1pt);
  \foreach \xx in {.45,.96,2.90,3.55}
    \fill[diagBlue] (\xx,2.05) circle (2.1pt);
  \foreach \xx in {.70,1.2,2.72}
    \fill[diagBlue] (\xx,0) circle (2.1pt);
  \node[diagBlue] at (2.15,.95) {$\cdots$};
  \node[diagBlue] at (1.95,2.05) {$\cdots$};
  \node[diagBlue] at (1.95,0) {$\cdots$};
  \node[font=\footnotesize,above=3pt] at (3.90,.95) {$N_0$ points};
  \node[font=\footnotesize,above=3pt] at (3.45,2.05) {$N_1$ points};
  \node[font=\footnotesize,above=3pt] at (3.40,0) {$N_s$ points};
  \node[diagGray] at (.55,.46) {$\vdots$};
\end{tikzpicture}
\end{minipage}\hfill
\begin{minipage}[t]{.49\linewidth}
\centering
{\small\bfseries Young diagram $I$}\par\smallskip
\begin{tikzpicture}[x=1cm,y=.78cm,font=\small,>=Stealth]
  \draw[->,diagGray!65] (-.15,0)--(4.8,0) node[right] {$a$};
  \draw[->,diagGray!65] (0,-.2)--(0,2.7) node[above] {$b$};
  \foreach \xx in {0,.4,.8,3.6,4.0,4.4}
    \fill[diagOrange] (\xx,0) circle (2.1pt);
  \foreach \xx in {0,.4,.8,2.6,3.0}
    \fill[diagOrange] (\xx,.8) circle (2.1pt);
  \foreach \xx in {0,.4,1.65}
    \fill[diagOrange] (\xx,2.05) circle (2.1pt);
  \node[diagOrange] at (2.15,0) {$\cdots$};
  \node[diagOrange] at (1.7,.8) {$\cdots$};
  \node[diagOrange] at (1.03,2.05) {$\cdots$};
  \node[font=\footnotesize,above=3pt] at (3.8,0) {$N_0$ monomials};
  \node[font=\footnotesize,above=3pt] at (3.3,.8) {$N_1$ monomials};
  \node[font=\footnotesize,above=3pt] at (2.7,2.05) {$N_s$ monomials};
  \node[left=3pt] at (0,.8) {$1$};
  \node[left=3pt] at (0,2.05) {$s$};
  \node[below left=3pt] at (0,0) {$0$};
  \node[diagGray] at (.4,1.43) {$\vdots$};
  \node[diagGray] at (1.55,1.5) {$\ddots$};
\end{tikzpicture}
\end{minipage}
\end{figure}

\begin{proof}
Put $Q_i(y)=\prod\limits_{0\le t<i}(y-\beta_t)$ for $0\le i\le s$, so that $Q_0=1$.
We construct an interpolating polynomial of the form
$F=\sum\limits_{i=0}^s Q_i(y)h_i(x)$
by choosing $h_0,\ldots,h_s$ successively with $\deg h_i<N_i$.
Adding $Q_i(y)h_i(x)$
preserves the values on rows labeled $0,\ldots,i-1$, since $Q_i$
vanishes there. On~row~$i$, the nonzero factor $Q_i(\beta_i)$
reduces the remaining conditions to Lagrange interpolation in $x$
at $N_i$ distinct first coordinates, determining $h_i$ uniquely.
The polynomials $x^aQ_i(y)$, $0\le a<N_i$, form a basis
of $V_I$: their leading terms in $y$ are $x^ay^i$, and the
remaining monomials are indexed by $I$ because $N_t\ge N_i$ for $0\le t<i$.
This proves existence and uniqueness.
\end{proof}

Given points $P_1,\ldots,P_r\in\CC^2$, prescribing values
$F(P_i)=c_i$ for a polynomial $F$ of degree at most $d$ gives
a system of linear equations in its coefficients, generally inhomogeneous.
We now ask whether the associated
homogeneous system $F(P_i)=0$ has a nonzero solution.
Equivalently, we seek a plane algebraic curve $F(x,y)=0$
passing through these points.

\begin{figure}[htbp]
\centering
\begin{tikzpicture}[x=1cm,y=1cm,scale=.70,font=\small]
  \begin{scope}[xshift=7.7cm,rotate=22]
    \fill[diagBlue!4] (0,0) ellipse [x radius=2.05,y radius=1.22];
  \end{scope}
  \foreach \shift in {0,7.7} {
    \begin{scope}[xshift=\shift cm]
      \draw[->,diagGray!50] (-2.6,0)--(2.75,0) node[right,text=diagGray] {$x$};
      \draw[->,diagGray!50] (0,-1.85)--(0,1.95) node[above,text=diagGray] {$y$};
    \end{scope}
  }

  \begin{scope}[xshift=7.7cm,rotate=22]
    \draw[diagBlue,line width=1.25pt] (0,0) ellipse [x radius=2.05,y radius=1.22];
  \end{scope}

  \foreach \shift in {0,7.7} {
    \begin{scope}[xshift=\shift cm,rotate=22]
      \foreach \angle in {12,83,156,225,297} {
        \fill[white] ({2.05*cos(\angle)},{1.22*sin(\angle)}) circle (3.5pt);
        \fill[diagOrange] ({2.05*cos(\angle)},{1.22*sin(\angle)}) circle (2.6pt);
      }
    \end{scope}
  }
  \begin{scope}[rotate=22]
    \node[above right=3pt,text=diagOrange] at ({2.05*cos(12)},{1.22*sin(12)}) {$P_1$};
    \node[above left=5pt,text=diagOrange] at ({2.05*cos(83)},{1.22*sin(83)}) {$P_2$};
    \node[above left=3pt,text=diagOrange] at ({2.05*cos(156)},{1.22*sin(156)}) {$P_3$};
    \node[below left=3pt,text=diagOrange] at ({2.05*cos(225)},{1.22*sin(225)}) {$P_4$};
    \node[below right=3pt,text=diagOrange] at ({2.05*cos(297)},{1.22*sin(297)}) {$P_5$};
  \end{scope}

   \node[text=diagBlue] at (7.7,-2.25) {$F(x,y)=0,\qquad \deg F=2$};
\end{tikzpicture}
\end{figure}

The polynomials of total degree at most $d$ form a vector space
with basis $x^ay^b$, where $a,b\ge0$ and $a+b\le d$. Its dimension is
$$D=1+2+\cdots+(d+1)=\binom{d+2}{2}.$$
Each condition $F(P_i)=0$ is one homogeneous linear equation
in the coefficients. The corresponding $r\times D$ matrix is
called the \emph{evaluation matrix}; its entry in row $i$ and
the column indexed by $x^ay^b$ is the value of the monomial $x^ay^b$ at $P_i$.
If $r<D$, there are fewer equations than
unknowns, so a nonzero solution exists for \emph{every}
configuration of points. The converse requires a condition on
their position: arbitrarily many points can lie on a curve of fixed positive degree.
To prove that this system has the largest possible rank for
general points, apply Lemma~\ref{lem:row-interpolation} to the triangular set
$$S=T_d=\{(j,k)\in\ZZ_{\ge0}^2:j+k\le d\}.$$
Its row lengths are $d+1,d,\ldots,1$, so $I=T_d$.
Thus evaluation at these $D=\binom{d+2}{2}$ points is an
isomorphism on polynomials of degree at most $d$.
The diagrams below show $d=3$ and $D=10$.

\begin{figure}[!htb]
\centering
\begin{minipage}[t]{.49\linewidth}
\centering
{\small\bfseries Points $T_3$}\par\smallskip
\begin{tikzpicture}[x=.95cm,y=.57cm,font=\small,>=Stealth]
  \fill[diagBlue!5] (0,0)--(3,0)--(0,3)--cycle;
  \draw[->,diagGray!65] (-.2,0)--(3.7,0) node[right] {$j$};
  \draw[->,diagGray!65] (0,-.2)--(0,3.7) node[above] {$k$};
  \draw[diagBlue!55,dashed] (3,0)--(0,3);
  \foreach \xx/\yy in {0/0,1/0,2/0,3/0,0/1,1/1,2/1,0/2,1/2,0/3}
    \fill[diagBlue] (\xx,\yy) circle (2.4pt);
  \foreach \xx in {0,1,2,3}
    \node[below=4pt] at (\xx,0) {$\xx$};
  \foreach \yy in {1,2,3}
    \node[left=4pt] at (0,\yy) {$\yy$};
\end{tikzpicture}
\end{minipage}\hfill
\begin{minipage}[t]{.49\linewidth}
\centering
{\small\bfseries Monomials of degree $\le3$}\par\smallskip
\begin{tikzpicture}[x=.95cm,y=.57cm,font=\small,>=Stealth]
  \fill[diagOrange!5] (0,0)--(3,0)--(0,3)--cycle;
  \draw[->,diagGray!65] (-.2,0)--(3.7,0) node[right] {$a$};
  \draw[->,diagGray!65] (0,-.2)--(0,3.7) node[above] {$b$};
  \draw[diagOrange!65,dashed] (3,0)--(0,3);
  \foreach \xx/\yy in {0/0,1/0,2/0,3/0,0/1,1/1,2/1,0/2,1/2,0/3}
    \fill[diagOrange] (\xx,\yy) circle (2.4pt);
  \node[below=4pt] at (0,0) {$1$};
  \node[below=4pt] at (1,0) {$x$};
  \node[below=4pt] at (2,0) {$x^2$};
  \node[below=4pt] at (3,0) {$x^3$};
  \node[left=4pt] at (0,1) {$y$};
  \node[right=4pt] at (1,1) {$xy$};
  \node[right=4pt] at (2,1) {$x^2y$};
  \node[right=4pt] at (1,2) {$xy^2$};
  \node[left=4pt] at (0,2) {$y^2$};
  \node[left=4pt] at (0,3) {$y^3$};
\end{tikzpicture}
\end{minipage}
\end{figure}

The entries of the evaluation matrix are polynomials in the point
coordinates. For $r\le D$, choose $r$ points of $T_d$; for $r\ge D$,
add $r-D$ distinct points to $T_d$. In either case, the evaluation matrix has
rank $\min(r,D)$, so one of its minors of this size is nonzero.
This minor remains nonzero outside the zero set of a nonzero
polynomial in the point coordinates. A property that holds outside
such a set is said to hold for \emph{general} points.
The kernel therefore has dimension $\max(D-r,0)$, proving
\begin{equation}\label{eq:intro-simple-bound}
 \begin{gathered}
 \text{there is a nonzero polynomial of degree at most $d$}\\
 \text{vanishing at $r$ general points}
 \end{gathered}
 \quad\Longleftrightarrow\quad r<\binom{d+2}{2}.
\end{equation}

We can also prescribe values of the partial derivatives. Prescribing a function's
values and first derivatives, for example, determines the tangent
planes to its graph. These are again linear conditions on the
polynomial's coefficients. As before, we study the homogeneous system
of linear equations, in which the prescribed values and derivatives are zero.

\paragraph{Fat points.}
A polynomial $F$ vanishes to \emph{order at least $m$} at
$P=(x_0,y_0)$ if 
\[
 \left.\frac{\partial^{a+b}F}{\partial x^a\,\partial y^b}
 \right|_{(x,y)=(x_0,y_0)}=0
 \qquad(a,b\ge0,\ a+b<m).
\]
The ideal $(x-x_0,y-y_0)^m\subset\CC[x,y]$ consists of the polynomials
satisfying these conditions and defines the \emph{fat point} of
multiplicity $m$ at $P$. Evaluation at this fat point records the value of the 
polynomial along with all of its partial derivatives of order
less than $m$. Equivalently, it records the class of $F$ in the
quotient ring $\CC[x,y]/(x-x_0,y-y_0)^m$, which is the coordinate algebra of the fat point.
This ring has dimension $\binom{m+1}{2}$ over $\CC$, which is called
the \emph{length} of the fat point. The length is equal to the number of rows a fat point
contributes to the evaluation matrix.

For a second proof of \eqref{eq:intro-simple-bound}, take the fat
point of multiplicity $d+1$ at the origin. Its length is
$D=\binom{d+2}{2}$, and evaluation on polynomials of degree at most
$d$ is an isomorphism: it simply records all coefficients of these polynomials.
By Lemma~\ref{lem:young-smoothing}, this fat point admits a
\emph{flat deformation} into $D$ distinct points. The entries of the
evaluation matrix are regular in the deformation parameter.
Its determinant
is nonzero at the fat point and hence remains nonzero for nearby
parameters. Evaluation at the resulting $D$ distinct points is
therefore an isomorphism. For $r<D$, take a subset of these points;
for $r>D$, add distinct points. The evaluation matrix has rank
$\min(r,D)$ in either case, and the same is true for general points
by the argument with minors above.

\bigskip

Imposing multiplicity $m$ at each of $r$ distinct points gives
$r\binom{m+1}{2}$ homogeneous linear conditions on the coefficients.
Thus, for every configuration of points, a nonzero polynomial of degree
at most $d$ vanishing to the prescribed orders exists whenever
$$ \binom{d+2}{2}>r\binom{m+1}{2}.
$$
If this inequality fails, a nonzero solution may still exist because
the equations may be linearly dependent. For fixed $r$, as $d$ and $m$ grow
proportionally, the leading terms are $d^2/2$ and $rm^2/2$.
This suggests $d/m=\sqrt r$ as the boundary for the existence
of such curves.

\begin{maintheorem}[Nagata's conjecture \cite{nagata1959}]\label{thm:plane}
Let $r\ge10$. For very general points $p_1,\ldots,p_r\in\PP^2$,
every nonzero homogeneous polynomial of degree $d$ vanishing to
order at least $m\ge1$ at each $p_i$ satisfies
\[
 d>m\sqrt r.
\]
\end{maintheorem}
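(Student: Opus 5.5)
The plan follows the route outlined in the abstract: degenerate the problem step by step until it becomes a lattice-point interpolation problem of the kind handled by Lemma~\ref{lem:row-interpolation}.

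\emph{Reductions.} By semicontinuity it is enough to produce one special configuration, possibly degenerate, for which curves with $d\le m\sqrt r$ are excluded. We first reduce to $r$ a perfect square or to a statement about nef classes. For the degeneration we use the standard degeneration to the normal cone of a smooth plane cubic $E\subset\PP^2$. The total space is the blow-up of $\PP^2\times\mathbb A^1$ along $E\times\{0\}$. Its special fibre is $\PP^2\cup R$, where $R=\PP(\OO_E\oplus N_{E/\PP^2})$ is the normal ruled surface and the two components are glued along $E$. We specialize all $r$ points into the component $R$. A limiting curve of degree $d$ with multiplicity $m$ at each point then gives a pair of compatible limit linear systems, one on $\PP^2$ and one on $R$. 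Compatibility means they match along $E$ with some order of contact $k$.

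\emph{Collision.} Within $R$ we let the $r$ fat points collide along a fibre direction into a single monomial scheme. As in the second proof of \eqref{eq:intro-simple-bound} and Lemma~\ref{lem:young-smoothing}, a flat limit of fat points can be chosen to be a monomial ideal in local coordinates $(u,v)$, with $u$ along $E$ and $v$ along the fibre. Vanishing on the limit scheme is a closed condition, so it is enough to prove non-existence for this scheme. The $\PP^2$ side is easy: a curve of degree $d$ meeting $E$ with contact $k$ exists only if $3d\ge k$, roughly. The whole difficulty moves to $R$. We need to show that sections of $\OO_R(aC_0+bf)$ vanishing on the monomial scheme and restricting suitably to $E$ force $d>m\sqrt r$.

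\emph{Main obstacle: interpolation on $R$.} Sections on $R$ decompose along the ruling into sections of line bundles on $E$, that is, theta functions. The monomial scheme imposes jet conditions at one point of $E$ in each graded piece. The key step, which I expect to be the hard one, is to degenerate $E$ to a nodal cubic. In that limit, theta functions of degree $n$ become Laurent monomials $t^j$ on $\mathbb G_m$, through the $q$-expansion as $q\to0$. Evaluation matrices thus become matrices of monomials evaluated at lattice points, and the jet conditions become lattice conditions. Using interpolation duality (the dual statement: the scheme imposes independent conditions if and only if a complementary system has the expected dimension), the nonvanishing of the relevant determinant reduces to interpolation on a triangle of lattice points. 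That triangle is exactly $T_d$-type, and Lemma~\ref{lem:row-interpolation} makes the evaluation an isomorphism there. The nonvanishing determinant in the nodal limit then propagates back to a smooth $E$ and to very general points by the minors argument.

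\emph{Numerics.} The last step is bookkeeping. We choose the weights of the monomial scheme and the contact order $k$ so that the expected dimensions on both components are non-positive whenever $d\le m\sqrt r$. The inequality then becomes strict because $r\ge10$ ensures $R$ and $E$ impose no extra $(-1)$-curve obstructions, with $r=9$ being exactly the cubic-pencil case. I expect the asymptotic analysis of theta functions, namely that the leading terms of the determinant are governed by the triangle and do not cancel, to be the crux.
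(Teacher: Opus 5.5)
Your outline has the right architecture (normal cone of a cubic, one monomial scheme on $\bar\Sigma$, nodal degeneration of theta functions, duality, lattice interpolation), but two concrete steps fail as written.

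\textbf{The collision goes in the wrong direction.} On $\bar\Sigma$ the sections of $\OO(hC_0)$ are $\sum_{j\le h}(\pi^*g_j)v^j$, which are polynomials of degree at most $h$ in the fibre coordinate. Suppose the $r$ points collide inside a fibre. Then the limit is $(u,v^r)^m$ with $u$ pulled back from the cubic. For each $u$-order $b<m$, the conditions involve only the $h+1$ order-$b$ Taylor coefficients of $g_0,\ldots,g_h$ at $\pi(x)$. So the scheme imposes at most $m(h+1)$ conditions on a space of dimension $\binom{3h+2}{2}$; for $r=10$, $m=h=3$ this is at most $12$ against $55$. The vanishing you need is therefore false.

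No other twist rescues this. The restriction of $\rho^*\OO(d)(-k\mathcal E)$ to $\mathcal E\simeq\bar\Sigma$ has sections of fibre degree at most $d/3$, so the scheme imposes at most $m(d/3+1)$ conditions. Meanwhile the central fibre carries at least $\binom{d+2}{2}$ sections by semicontinuity.

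The scheme must be $(w^r,y)^m$ with $w$ the coordinate along $C$; the points of Lemma~\ref{lem:collision} sit at $w=sc_\alpha$, $y=0$. It is long (length $rm$) in the direction where theta functions supply many frequencies $e^{kw}$. It is short (length $m$) along the fibre, where only $e^{jy}$ with $0\le j\le h$ occur.

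\textbf{The lattice step is misidentified, and with it the actual content of the proof.} The surviving exponents do not form $T_d$, and the limiting matrix is not square. It is $|I|\times|S|$ (for instance $60\times55$), and you need full column rank.

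Which exponents survive is a choice you have to make. The paper takes $d=3h$ and $M_\tau=\mathcal B_{9,\tau^{\alpha+9/2-\delta}}$ with $\alpha=rm/h-9$. It works in the moving chart $z=\sqrt\tau e^w$ at a point tending to the node. By Lemma~\ref{lem:theta-limit}, the normalized theta basis of $M_\tau^e$ then tends to $e^{kw}$ exactly for $\alpha e\le k<(\alpha+9)e$. Hence each basis section of $\OO(hC_0)$ converges to a single exponential $e^{kw+jy}$, indexed by the sheared triangle $S_h(\alpha)$, and there is no cancellation of leading terms to control.

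The real work is Proposition~\ref{prop:one-point-triangle}: the Young diagram of row lengths of $S_h(\alpha)$ fits inside the staircase $I^{1,r}_{rm}$. This containment is exactly where $9h^2\le rm^2$ (equality allowed) and $r\ge10$ (through $\alpha>0$) enter. Strictness does not come from an absence of $(-1)$-curve obstructions. A non-positive expected dimension is necessary but beside the point.

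\textbf{Smaller points.} You also need the specific twist $\rho^*\OO(3h)(-h\mathcal E)$. It is trivial on the $\PP^2$ component and equal to $\OO(hC_0)$ on $\bar\Sigma$, so one line bundle plus semicontinuity suffices, with no matching over contact orders. You also need the cubing trick, which reduces arbitrary $d\le m\sqrt r$ to the case $3\mid d$. Finally, no reduction to square $r$ is known (that is Nagata's own case), and none is needed.
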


Here \emph{very general} means outside a countable union of proper
algebraic subsets of the configuration space. This allows the
inequality to hold for every pair of integers $d,m$ at the same
configuration of points.
The restriction $r\ge10$ is necessary: nine points always lie on a
nonzero cubic. Nagata proved the conjecture when $r$ is a square
\cite{nagata1959,nagata1965}; see \cite[\S2]{chmr2013} for its
history and related questions.

\paragraph{Scope of these notes.}
I began writing these notes while preparing my talk at UMass Amherst. They are not intended as a
definitive account, and I claim no originality. I have reorganized
and simplified the argument by working directly on the normal ruled
surface of a smooth plane cubic, without performing elementary transformations of ruled surfaces.
Section~\ref{sec:row-interpolation} explains interpolation duality and 
proves the interpolation
statement on a triangle of lattice points.
Section~\ref{sec:elliptic-ruled} realizes this interpolation matrix as
a limit of matrices obtained by evaluating theta functions and their
derivatives. Section~\ref{sec:plane-transfer} transfers the problem  
to the plane and proves Nagata's conjecture.
Section~\ref{sec:one-point} applies the same construction to monomial
valuations and determines the nef and pseudoeffective cones of
weighted blow-ups of $\PP^2$ for very general local parameters,
when the ratio of weights is greater than nine.
I chose to follow the approach using complex geometry in
\cite{openai-nagata2026}, rather than the  algebraic
approach developed in the companion manuscripts
\cite{openai-surfaces2026,openai-higher2026,openai-positive2026},
which I have not studied in detail. The former is also closer
in spirit to our discussions of Nagata's conjecture and related topics,
for example~\cite{tevelev2026}, during the
Summer REU in Algebraic Geometry at UMass Amherst.

\bigskip
As I read the released manuscripts~\cite{openai-release2026}, I wondered whether I might be
their first human reader. They seemed almost entirely unedited.
While it is possible that the release served solely as
a benchmark demonstrating to the investors that a large language model could solve
very hard problems, I suspect that the company expected that 
the mathematical community, either out of curiosity or out of a sense of duty to the profession,
would eventually sort out these contributions.  
Absorbing these arguments into mainstream mathematics and assessing their impact
on our field will require a considerable editorial effort from the
community. This work demands expertise and time, yet brings little
of the recognition associated with original research.
Like many others, I believe this approach harms
our profession, especially the motivation and training of young
people. 

At the same time, I think these manuscripts are a stunning achievement.
The model managed to connect the dots and solve the problems that many of the experts thought were out of reach. 
I think a wholesale re-evaluation of the mathematical corpus is currently underway,
with the old underbrush of mathematical conjectures rapidly being burned in the AI fire. However,
it is up to the mathematical community to figure out what mathematics will look like in the future.

I hope for closer, more respectful collaboration between
frontier AI companies and the academic research community.
The enormous financial resources available to these companies could
support a substantial, independent foundation for research in
mathematics and computer science. Such a foundation could offer
research grants, graduate fellowships, and access to the best frontier models, especially
to young scientists pursuing their own original research. It would give
the community a role in shaping this research and sharing its
benefits, rather than leaving the work to opaque in-house laboratories
with little accountability for its effects on the profession.
I believe this investment would also benefit the AI companies
themselves in the long term.
The foundation could follow the National Science Foundation's model
of peer review and provide sustained support for future generations
of mathematicians and computer scientists.

\paragraph{Acknowledgements.}
I am grateful to the participants in the Reading Seminar in Algebraic
Geometry and 2026 Summer REU  in Algebraic
Geometry at UMass Amherst for many helpful discussions.
This work was supported by the National Science Foundation under
grant DMS-2401387. These notes were edited and illustrated with ChatGPT Astra.

\section{Staircase interpolation}\label{sec:row-interpolation}

The main result of this section is Proposition~\ref{prop:one-point-triangle}, which describes
a vector space of polynomials
in two variables that
interpolates arbitrary values on a certain triangle of lattice points.
We begin by reviewing monomial schemes defined by Young diagrams and their
flat deformations. We then explain
the duality between interpolation on lattice points and vanishing on
monomial schemes.

Let $I\subset\ZZ_{\ge0}^2$ be a Young diagram;
thus $I$ is closed under decreasing either coordinate.
The monomials indexed by points of $\ZZ_{\ge0}^2$ outside $I$ span an ideal in $\CC[w,y]$ defining the finite scheme 
\begin{equation}\label{eq:intro-finite-scheme}
 Z_I=\operatorname{Spec}
 \frac{\CC[w,y]}{(y^a w^b:(a,b)\not\in I)}.
\end{equation}
It is supported at the origin.
Its coordinate ring has a basis consisting of $y^aw^b$, $(a,b)\in I$, so
$Z_I$ has length $|I|$. For the triangle $I=T_m$,
$Z_I$ is the fat point of multiplicity $m+1$, with ideal $(w,y)^{m+1}$.

\begin{lemma}[{\cite[Lemma~18.10]{miller-sturmfels2005}}]\label{lem:young-smoothing}
The scheme $Z_I$ is a flat limit of $|I|$ distinct points.
In particular, a fat point of multiplicity $m$ is a flat limit
of $\binom{m+1}{2}$ distinct points.
\end{lemma}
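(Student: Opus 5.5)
We prove the lemma with an explicit one-parameter family of reduced point sets whose ideals degenerate to the monomial ideal of $Z_I$. The family comes from the staircase construction already used in Lemma~\ref{lem:row-interpolation}.

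\textbf{The family.} Write the rows of $I$ (indexed by the exponent of $y$) as having lengths $N_0\ge\cdots\ge N_s>0$. Fix distinct numbers $\beta_0,\ldots,\beta_s$ and distinct numbers $\alpha_0,\alpha_1,\ldots$. For $t\ne0$ let
\[
S_t=\{(t\alpha_a,\,t\beta_b):(a,b)\in I\},
\]
read in the coordinates $(w,y)$, with $w$ paired with $a$. Then $S_t$ consists of $|I|$ distinct points. It is the image under scaling by $t$ of a grid-like set $S$ with $N_b$ points on the row $y=\beta_b$. Consider the subscheme $\mathcal Z\subset\mathbb A^2\times(\mathbb A^1\setminus0)$ cut out by these points, and let $\overline{\mathcal Z}$ be its closure in $\mathbb A^2\times\mathbb A^1$. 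Over the smooth curve $\mathbb A^1$, this closure is flat because it has no components or embedded points supported over $t=0$. Its fiber at $t=0$ is therefore a flat limit of $|I|$ distinct points, and it has length $|I|$.

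\textbf{Identifying the limit.} It remains to show that the fiber at $0$ is exactly $Z_I$. Since both have length $|I|$, it suffices to prove one inclusion: every monomial $y^bw^a$ with $(a,b)\notin I$ lies in the limit ideal. The limit ideal contains $\mathrm{in}(f)$ for every $f$ vanishing on $S$, where $\mathrm{in}$ denotes the lowest-degree homogeneous part. This is because if $f$ vanishes on $S$, then $f(w/t,y/t)$ times a suitable power of $t$ vanishes on $S_t$ and specializes to $\mathrm{in}(f)$ at $t=0$.

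So I need, for each $(a,b)\notin I$, a polynomial vanishing on $S$ whose lowest-degree part is $w^ay^b$. I would take the staircase polynomials from the proof of Lemma~\ref{lem:row-interpolation}, but written in the reversed form
\[
f=\prod_{t<b}(y-\beta_t)\prod_{u<a}(w-\alpha_u).
\]
This vanishes on $S$: on row $t<b$ the first factor is zero, and on row $t\ge b$ the row has $N_t\le N_b\le a$ points, with $w$-coordinates among $\alpha_0,\ldots,\alpha_{a-1}$, so the second factor is zero. However, its lowest-degree part is a constant, not $w^ay^b$. This is the point where the naive scaling toward the origin fails.

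\textbf{Fixing the degeneration.} The degeneration has to be arranged so that the \emph{top} degree part is what survives. I would therefore degenerate by sending the coordinates to the origin through the parameters themselves, replacing $\alpha_u$ by $t\alpha_u$ and $\beta_b$ by $t\beta_b$ in $S_t$ and in $f$. Then $f_t=\prod_{t'<b}(y-t\beta_{t'})\prod_{u<a}(w-t\alpha_u)$ lies in the ideal of $S_t$ and specializes at $t=0$ to $y^bw^a$, exactly as required.

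Together with the length count this gives equality, and hence flatness onto $Z_I$. The second claim follows by taking $I=T_{m-1}$, which has $\binom{m+1}{2}$ elements.

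The main obstacle is justifying that the fiber of the closure has length exactly $|I|$ rather than less. I would handle this with the standard fact that the closure of a family of finite subschemes over a punctured smooth curve is flat with constant length. Alternatively, one can check this directly: the $|I|$ polynomials $w^ay^b$ with $(a,b)\in I$ remain independent modulo the limit ideal, because by Lemma~\ref{lem:row-interpolation} the evaluation map on $V_I$ at $S_t$ is an isomorphism.
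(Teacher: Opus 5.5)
Your proof is correct. It is the standard distraction argument for this fact; the paper gives no proof of its own and only cites Miller--Sturmfels. (You pair $w$ with $a$, so you literally obtain the paper's $Z_I$ for the transposed diagram. Swapping $w$ and $y$ fixes this harmlessly.)

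One passage should be deleted, because it is false. Scaling $S$ toward the origin extracts the \emph{top}-degree part of $f$, not the lowest: $t^{\deg f}f(w/t,y/t)=\sum_k t^{\deg f-k}f_k(w,y)$ specializes to $f_{\deg f}$ at $t=0$. So the ``naive'' degeneration does not fail. Your ``fixed'' family is exactly the family $S_t$ you started with: $f_t=t^{a+b}f(w/t,y/t)$, and the top-degree part of $f$ is $y^bw^a$.

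For the length of the special fiber, rely on the closure argument. The closure is the union of the $|I|$ lines $\{(t\alpha_a,t\beta_b,t)\}$, hence finite and torsion-free over $\CC[t]$, and therefore free of rank $|I|$. Your alternative via Lemma~\ref{lem:row-interpolation} is incomplete as stated. An element of the ideal of the closure that specializes into $V_I$ need not lie in $V_I$ for $t\ne0$, so you would first have to reduce it modulo the $f_t$.
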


We will need a closely related result. For positive integers $r,m$, consider the Young diagram
\[
 I=\{(a,b)\in\ZZ_{\ge0}^2:ra+b<rm\}.
\]
We will realize $Z_I$ as a flat limit of $r$ fat points of
multiplicity $m$. Let $X$ be a smooth irreducible complex algebraic
surface, $\mathcal B$ a line bundle on $X$, and
$W\subset H^0(X,\mathcal B)$ a finite-dimensional space of sections.
Choose local holomorphic coordinates $(w,y)$ centered at a point
of $X$ and a local frame of~$\mathcal B$ near that point. In this chart, the
monomials indexed by points outside $I$ generate the ideal
\begin{equation}\label{eq:staircase-scheme}
 \cI_Z=(y^a w^b:ra+b\ge rm)=(w^r,y)^m.
\end{equation}
Thus $Z$ is the scheme $Z_I$ of \eqref{eq:intro-finite-scheme} in
these coordinates. Its local algebra has a basis of $y^aw^b$, $(a,b)\in I$,
and hence
$\length(Z)=|I|=\sum\limits_{a=0}^{m-1}r(m-a)=\frac{rm(m+1)}2$.
Let $H_F(w,y)$ be the local representative of $F\in W$ in the chosen
frame. In the normalized basis $y^aw^b\over a!b!$, restriction to $Z$
is the jet map
\begin{equation}\label{eq:jet-map}
 J:W\longrightarrow\CC^I,\qquad
 (JF)_{a,b}
 =\left.\frac{\partial^{a+b}H_F}{\partial y^a\,\partial w^b}
 \right|_{(w,y)=(0,0)}.
\end{equation}
The collision of fat points in the following lemma is a special case of
\cite[\S3.1, Proposition~5]{evain1998}.

\begin{figure}[htbp]
\centering
\begin{tikzpicture}[x=1cm,y=1cm,font=\small,>=Stealth]
  \draw[->,diagGray] (0.15,1.30)--(4.15,1.30)
    node[right] {$w$};
  \foreach \xx/\ii in {0.50/1,1.50/2,2.50/3,3.50/4}{
    \fill[diagBlue!15] (\xx,1.30) circle (0.26);
    \draw[diagBlue,thick] (\xx,1.30) circle (0.26);
    \draw[diagBlue!55] (\xx,1.30) circle (0.16);
    \fill[diagBlue] (\xx,1.30) circle (1.6pt);
    \node[below] at (\xx,0.93) {$sc_{\ii}$};
  }
  \node[align=center,font=\footnotesize] at (2.05,0.00)
    {family ideal: $(P_s(w),y)^3$};
  \draw[->,thick,diagGray] (5.65,1.50)--(6.65,1.50);
  \node[below,font=\footnotesize] at (6.15,1.35) {$s\to0$};

  \begin{scope}[shift={(8.10,.30)},x=1.10cm,y=.22cm]
    \draw[->,diagGray] (-.15,0)--(3.25,0) node[right] {$a$};
    \draw[->,diagGray] (0,-.4)--(0,12.2) node[above] {$b$};
    \foreach \horizontalindex/\last in {0/11,1/7,2/3}{
      \foreach \verticalindex in {0,...,\last}
        \fill[diagBlue] (\horizontalindex,\verticalindex) circle (1.7pt);
    }
    \foreach \horizontalindex in {1,2}
      \node[below=3pt] at (\horizontalindex,0) {$\horizontalindex$};
    \node[below left=2pt] at (0,0) {$0$};
    \foreach \verticalindex in {4,8,11}
      \node[left=3pt,font=\footnotesize] at (0,\verticalindex) {$\verticalindex$};
    \node[anchor=west,align=left] at (1.15,10.2)
      {$\cI_Z=(w^4,y)^3$};
  \end{scope}
  \node[align=center,font=\footnotesize] at (9.70,-.38)
    {$y^a w^b:\quad 4a+b<12$};
\end{tikzpicture}
\caption{The collision of fat points for $r=4$ and $m=3$.}\label{fig:collision}
\end{figure}

\begin{lemma}\label{lem:collision-open}\label{lem:collision}
The scheme $Z$ is a flat limit of $r$ fat points of multiplicity $m$
with distinct supports. For $r$ general distinct points of $X$,
the restriction of $W$ to the union of the corresponding fat points has rank at least
$\rk J$. In particular, this restriction is injective if $J$ is injective.
\end{lemma}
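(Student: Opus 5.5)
The plan is to write down an explicit one-parameter family, check that it is flat with special fiber $Z$, and then use lower semicontinuity of rank for the restriction maps along the family.

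\textbf{The family.} Fix distinct constants $c_1,\ldots,c_r\in\CC$ and set $P_s(w)=\prod_{i=1}^r(w-sc_i)$. In the chart with coordinates $(w,y)$, consider the family of ideals
\[
 \cI_s=(P_s(w),y)^m,\qquad s\in\CC,
\]
as in Figure~\ref{fig:collision}. For $s\ne0$ the factors $w-sc_i$ are pairwise coprime, so $(P_s(w),y)=\bigcap_i(w-sc_i,y)$. The ideals $(w-sc_i,y)$ are pairwise comaximal, hence so are their $m$-th powers. Therefore
\[
 \cI_s=\prod_i(w-sc_i,y)^m=\bigcap_i(w-sc_i,y)^m,
\]
which is the union of $r$ fat points of multiplicity $m$ at the distinct points $(sc_i,0)$. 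At $s=0$ we have $P_0=w^r$, so $\cI_0=(w^r,y)^m=\cI_Z$ by \eqref{eq:staircase-scheme}.

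\textbf{Flatness.} I would show that for every $s$ the classes of $y^aw^b$ with $(a,b)\in I$ form a basis of $\CC[w,y]/\cI_s$. Note that $\cI_s$ is generated by $y^aP_s^{m-a}$ for $0\le a\le m$. Divide by these generators, treating $P_s$ as a monic polynomial in $w$ of degree $r$ whose coefficients depend polynomially on $s$. Each $y^aw^b$ with $ra+b\ge rm$ reduces to the span of the monomials indexed by $I$, since the leading monomial of $y^aP_s^{m-a}$ is $y^aw^{r(m-a)}$. This shows the classes span. The length equals $rm(m+1)/2=|I|$ for $s\neq 0$, because each fat point has length $\binom{m+1}2$, and it equals $|I|$ at $s=0$. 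Hence the classes form a basis for all $s$. So the quotient is a free $\CC[s]$-module of rank $|I|$ with a basis independent of $s$, which is flatness.

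\textbf{Semicontinuity.} In the basis $y^aw^b/(a!b!)$ of the quotient, restriction of $W$ to the fiber over $s$ is a $|I|\times\dim W$ matrix $M(s)$. Its entries are holomorphic in $s$ near $0$: expand $H_F$ in a Taylor series and reduce modulo $\cI_s$ in the fixed basis; only finitely many Taylor terms matter, and the reduction coefficients are polynomial in $s$. At $s=0$ the matrix is $M(0)=J$ from \eqref{eq:jet-map}. A nonzero minor of $J$ of size $\rk J$ stays nonzero for small $s\ne0$. Therefore restriction of $W$ to the union of $r$ fat points at the distinct points $(sc_i,0)$ has rank at least $\rk J$.

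\textbf{Passing to general points.} The rank of restriction of $W$ to $\bigcup_i V((x-p_i)^m)$ is lower semicontinuous on the configuration space $\{(p_1,\ldots,p_r)\in X^r \text{ distinct}\}$. Indeed, the fat points form a flat family over this space, so the restriction maps form a map of vector bundles, and the condition that some minor of size $\rk J$ is nonzero is Zariski open. This open set is nonempty, since it contains the configuration just found. Because $X$ is irreducible, the configuration space is irreducible, so the open set is dense and the condition holds for general points. Injectivity follows when $\rk J=\dim W$.

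\textbf{Main obstacle.} The step needing the most care is the flatness/basis claim. Concretely, one must justify that reduction modulo $\cI_s$ keeps $s$-polynomial coefficients in the fixed basis, uniformly down to $s=0$; the leading-term division argument together with the length count handles this.
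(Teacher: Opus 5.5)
Your route is the paper's: the same family $(P_s(w),y)^m$, a monic-division argument for the basis $y^aw^b$ with $(a,b)\in I$, persistence of a nonzero minor of $J$ for small $s$, and Zariski openness on the configuration space. Your flatness step (spanning over $\CC[s]$ plus the fiberwise length count) is valid, because a nontrivial $\CC[s]$-relation would specialize to a nontrivial relation in some fiber. The paper gets freeness more directly from the $y$-graded splitting $\CC[s,w,y]/(P_s,y)^m\simeq\bigoplus_{a<m}\CC[s,w]/(P_s^{m-a})\,y^a$.

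One step fails as written: your justification that the restriction matrix $M(s)$ is holomorphic. You say that ``only finitely many Taylor terms matter, and the reduction coefficients are polynomial in $s$.'' For $s\ne0$ the fiber is supported at the points $(sc_i,0)$, not at the origin, so $\cI_s$ contains no power of $(w,y)$. For instance, $w^N\notin\cI_s$ whenever some $c_i\ne0$. Every Taylor coefficient of $H_F$ therefore contributes, and the entries are not polynomial in $s$. Already for $r=m=1$ the single entry is $H_F(sc_1,0)$.

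Holomorphy is true, but it needs an analytic argument. One option is what the paper does. Analytify the finite flat family over a disk $\mathbb D$ small enough that the supports stay in the chart. Its structure sheaf is then a free $\OO_{\mathbb D}$-module with the monomial basis, so restricting a holomorphic $H_F$ gives holomorphic coordinates.

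The other option is to make your reduction explicit using the $y$-grading. Write $H_F\equiv\sum_{a<m}g_a(w)y^a \bmod y^m$. The class of $H_F$ is then $\sum_{a<m}R_a(w,s)\,y^a$, where $R_a$ is the remainder of $g_a$ modulo the monic polynomial $Q=P_s^{m-a}$:
\[
 R_a(w,s)=\frac1{2\pi i}\oint_{|\zeta|=\rho}\frac{g_a(\zeta)}{Q(\zeta)}\cdot\frac{Q(\zeta)-Q(w)}{\zeta-w}\,d\zeta .
\]
This is a polynomial in $w$ of degree $<r(m-a)$. Its coefficients are holomorphic in $s$ as long as the roots $sc_i$ stay inside $|\zeta|=\rho$. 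At $s=0$ it is the Taylor polynomial of $g_a$, which recovers $J$. With this repair, the rest of your argument goes through.
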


\begin{proof}
Choose distinct complex numbers $c_1,\ldots,c_r$ and put
$P_s(w)=\prod\limits_{\alpha=1}^r(w-sc_\alpha)$,
where $s$ is a parameter. The polynomial ideal $(P_s(w),y)^m$
defines a finite flat family over the $s$-line, as illustrated in
Figure~\ref{fig:collision}. 
Indeed, there is an
isomorphism of $\CC[s]$-modules
\[
 \frac{\CC[s,w,y]}{(P_s(w),y)^m}
 \simeq
 \bigoplus_{a=0}^{m-1}
 \frac{\CC[s,w]}{(P_s(w)^{m-a})}\,y^a.
\]
Since $P_s(w)^{m-a}$ is monic of degree $r(m-a)$ in $w$,
polynomial division shows that the quotient on the left is free over
$\CC[s]$ with basis $y^aw^b$, $(a,b)\in I$.
Its rank is $rm(m+1)/2$, proving finiteness and flatness
\cite[Tag~00NX]{stacks}.
Analytify this family and restrict the parameter $s$ to a sufficiently small disk
$\mathbb D$ about zero, so that its supports lie in the chosen coordinate
chart. The chart identifies it with a finite flat family
$\mathcal Z\to\mathbb D$ in $X$. At $s=0$, its ideal is $(w^r,y)^m$,
so the special fiber is $Z$. For $s\ne0$, the roots $sc_\alpha$ are
distinct. Near $(sc_\alpha,0)$, the polynomial $P_s(w)$ is $w-sc_\alpha$
times a unit, so the local ideal is $(w-sc_\alpha,y)^m$.
Thus this fiber is the disjoint union of the required $r$ fat points.

Fix a basis of $W$ and use the normalized basis
$y^aw^b\over a!b!$ of the coordinate algebra of the family. Restriction to the family is then represented
by a matrix holomorphic in $s$, whose value at $s=0$ represents $J$.
A nonzero minor of size $\rk J$ remains nonzero for small $s\ne0$.
Thus some configuration of $r$ distinct fat points has restriction
rank at least $\rk J$.
On the space of distinct $r$-point configurations in $X$, the
restriction maps vary algebraically. The same argument with minors
shows that the locus where the rank is at least $\rk J$ is Zariski open.
It is nonempty by the construction, proving the assertion for general points.
\end{proof}

\paragraph{Duality between interpolation and vanishing.}
Let $S\subset\ZZ^2$ be a finite set of lattice points,
and let $I$ be a nonempty finite Young diagram. In
Lemma~\ref{lem:row-interpolation}, $S$ indexes evaluation points and
$I$ indexes the monomials used for interpolation. We now use $S$ to
index Laurent monomials and $I$ to specify vanishing along a monomial
scheme supported at a single point.
On the algebraic torus $(\CC^*)^2$ with coordinates $(\xi,\eta)$, put
$ V_S=\Span_{\CC}\{\eta^j\xi^k:(j,k)\in S\}$.
Set $w=\xi-1$ and $y=\eta-1$, and let $Z_I$ be the monomial scheme
of \eqref{eq:intro-finite-scheme} in these coordinates, supported at
$(1,1)$. Every element of $V_S$ is regular near $(1,1)$, even when
some exponents are negative, so restriction to $Z_I$ is defined.

\begin{lemma}\label{lem:interpolation-duality}
Polynomials in the span of $u^av^b$, $(a,b)\in I$, interpolate
arbitrary values on $S$ if and only if the restriction map
$V_S\to H^0(Z_I,\OO_{Z_I})$ is injective. Equivalently, interpolation on $S$
holds if and only if the zero polynomial is the only Laurent polynomial $G\in V_S$ satisfying
\[
 \left.\frac{\partial^{a+b}G}{\partial\eta^a\,\partial\xi^b}
 \right|_{(\xi,\eta)=(1,1)}=0
 \qquad\text{for every }(a,b)\in I.
\]
\end{lemma}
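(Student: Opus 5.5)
The plan is to identify both sides with a single matrix and its transpose. Fix $S$ and $I$, and let $M$ be the $|S|\times|I|$ matrix with rows indexed by $(j,k)\in S$ and columns by $(a,b)\in I$, with entry $M_{(j,k),(a,b)}=j^ak^b$. This is the evaluation matrix of the monomials $u^av^b$, $(a,b)\in I$, at the lattice points $(u,v)=(j,k)\in S$. So interpolation of arbitrary values on $S$ by the span of these monomials means exactly that $M\colon\CC^I\to\CC^S$ is surjective. By linear algebra this holds if and only if $M^{T}\colon\CC^S\to\CC^I$ is injective. The task is therefore to show that the restriction map $V_S\to H^0(Z_I,\OO_{Z_I})$ is $M^T$ up to invertible changes of basis on both sides.

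First I compute with Euler operators instead of ordinary derivatives. Write $G=\sum_{(j,k)\in S}c_{jk}\,\eta^j\xi^k\in V_S$. Since $(\eta\partial_\eta)\eta^j=j\eta^j$, and similarly for $\xi$, we get
\[
 \bigl((\eta\partial_\eta)^a(\xi\partial_\xi)^bG\bigr)(1,1)=\sum_{(j,k)\in S}c_{jk}\,j^ak^b=(M^Tc)_{(a,b)}.
\]
Thus the linear functionals $\Lambda_{a,b}\colon G\mapsto\bigl((\eta\partial_\eta)^a(\xi\partial_\xi)^bG\bigr)(1,1)$, for $(a,b)\in I$, assemble into the map $M^T$ on coefficient vectors. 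This step uses no assumption on the signs of $j,k$, since Laurent monomials are regular at $(1,1)$.

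Next I compare these functionals with the jets defining restriction to $Z_I$. Since $w=\xi-1$ and $y=\eta-1$, derivatives in $(w,y)$ coincide with derivatives in $(\xi,\eta)$. The coordinate algebra of $Z_I$ has basis $y^aw^b$, $(a,b)\in I$, so after normalizing by $a!\,b!$ the restriction is the map $G\mapsto\bigl(\partial_\eta^a\partial_\xi^bG(1,1)\bigr)_{(a,b)\in I}$.

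This comparison is the key step. There is an identity $(\eta\partial_\eta)^a=\sum_{t\le a}S(a,t)\,\eta^t\partial_\eta^t$ with Stirling numbers $S(a,t)$ satisfying $S(a,a)=1$; it follows by induction on $a$. Evaluating at $\eta=\xi=1$ gives
\[
 \Lambda_{a,b}=\sum_{t\le a,\ t'\le b}S(a,t)S(b,t')\,\bigl(G\mapsto\partial_\eta^t\partial_\xi^{t'}G(1,1)\bigr).
\]
Because $I$ is a Young diagram, every index $(t,t')$ with $t\le a$, $t'\le b$ again lies in $I$. Hence the transition matrix from the jet functionals to the $\Lambda_{a,b}$ is unitriangular with respect to the product partial order on $I$, so it is invertible. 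It follows that restriction to $Z_I$ equals $U\circ M^T$ for an invertible $U$, composed with the isomorphism $\CC^S\cong V_S$ given by coefficients; the latter is an isomorphism because distinct Laurent monomials are linearly independent. So restriction is injective if and only if $M^T$ is injective, if and only if $M$ is surjective, which is interpolation. The equivalent formulation in terms of vanishing derivatives is just the statement that the kernel of the jet map is zero.

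The main point to handle carefully is this downward-closedness of $I$. It is exactly what makes the Euler-operator span and the jet span agree. I would also check the variable matching: the exponent of $\eta$ pairs with $u$ and with $a$.
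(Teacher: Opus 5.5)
Your proof is correct, and it is essentially the paper's argument read from the other side of the pairing. The paper replaces $u^av^b$ by $\binom{u}{a}\binom{v}{b}$, so that the evaluation matrix becomes exactly the transpose of the Taylor-coefficient matrix of $(1+y)^j(1+w)^k$. You instead keep the monomial matrix $(j^ak^b)$ and move the inverse triangular change, $u^a=\sum_t S(a,t)\,t!\binom{u}{t}$, onto the jet functionals; downward-closedness of $I$ does the same job in both.

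A nice feature of your version is that $\eta\partial_\eta$ and $\xi\partial_\xi$ are just $\partial_y$ and $\partial_w$ in the logarithmic coordinates $y=\log\eta$, $w=\log\xi$. So your computation is already the algebraic form of the matrix $B_0$ in \eqref{eq:limit-matrix} that the paper uses afterwards.
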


\begin{proof}
Use the binomial polynomials
$\binom{u}{a}\binom{v}{b}$, $(a,b)\in I$,
in place of the monomials $u^av^b$.
This is a triangular change of basis: each polynomial has
leading term $u^av^b/(a!b!)$, and all its other monomials
are indexed by $I$ because $I$ is a Young diagram.
The evaluation matrix at points of $S$ in this basis has entries
$\binom{j}{a}\binom{k}{b}$ for $(j,k)\in S$ and $(a,b)\in I$, including when $j$ or $k$ is negative.
These are exactly the coefficients of $y^aw^b$ in
$\eta^j\xi^k=(1+y)^j(1+w)^k$.
Thus the restriction matrix in the monomial basis of the
local algebra of $Z_I$ is the transpose of the evaluation
matrix at points of $S$. Consequently, evaluation is surjective if and only
if restriction is injective.
\end{proof}

For ordinary fat points, this duality between jets at a torus point
and interpolation on lattice points appears in
Dumnicki~\cite[Proposition~13]{dumnicki2007},
Gonz\'alez--Karu~\cite[Section~2]{gonzalez-karu2016},
Castravet~\cite[Lemmas~8.2--8.4]{castravet2018}, and
He~\cite{he2021}.
Lemma~\ref{lem:interpolation-duality} gives the same argument for any
Young diagram.

\paragraph{Restriction in holomorphic charts.}
Let $X$ be a smooth irreducible complex algebraic surface, with local
holomorphic coordinates $(\xi,\eta)$ taking the value $(1,1)$ at a
chosen point. Put $w=\log\xi$ and $y=\log\eta$, choosing branches
that vanish at $(1,1)$. Each generator $(\eta-1)^a(\xi-1)^b$ is equal to 
$y^aw^b$ multiplied by a holomorphic unit, so the monomial ideal of $Z_I$
is unchanged in the analytic local ring.
In the normalized basis $y^aw^b/(a!b!)$, restriction of the
germs $\eta^j\xi^k=e^{kw+jy}$ has the matrix
\begin{equation}\label{eq:limit-matrix}
 B_0=\left(
 \left.\frac{\partial^{a+b}e^{kw+jy}}{\partial y^a\,\partial w^b}
 \right|_{(w,y)=(0,0)}
 \right)_{(a,b)\in I,\,(j,k)\in S}
 =\bigl(j^ak^b\bigr)_{(a,b)\in I,\,(j,k)\in S}.
\end{equation}
The matrix $B_0$ is the transpose of the polynomial evaluation matrix on $S$.
The same calculation applies to sections of a holomorphic line bundle on $X$
expressed in a local frame. Suppose bases indexed by $S$ have local representatives
converging to $e^{kw+jy}$, together with the derivatives indexed by the set $I$.
Their restriction matrices then converge to $B_0$. If $B_0$ has full
column rank, one of its $|S|\times|S|$ minors is nonzero and remains
nonzero under sufficiently small perturbations. The nearby restriction
maps are therefore injective. In Section~\ref{sec:elliptic-ruled}, we
apply this observation to bases constructed from theta functions on elliptic curves degenerating to nodal curves.

\paragraph{Weighted monomial ideals.}
Let $p,q,K$ be positive integers. Put
\begin{equation}\label{eq:one-point-ideal}
 \mathfrak J_K^{p,q}(w,y)=(w^b y^a:pb+qa\ge K),\qquad
  I_K^{p,q}=\{(a,b)\in\ZZ_{\ge0}^2:pb+qa<K\}.
\end{equation}
The monomials indexed by $I_K^{p,q}$ form a basis of the quotient algebra
$\CC[w,y]/\mathfrak J_K^{p,q}$. The points in column $t$ of $I_K^{p,q}$ have integer second coordinates satisfying
$0\le b<(K-qt)/p$, so the column has height
\begin{equation}\label{eq:one-point-column-length}
 A_t=\max\left\{0,\left\lceil\frac{K-qt}{p}\right\rceil\right\}.
\end{equation}
For $(p,q)=(1,r)$ and $K=rm$, we recover the ideal of the collision of fat points
in Lemma~\ref{lem:collision}:
\begin{equation}\label{eq:one-point-nagata-ideal}
 \mathfrak J_{rm}^{1,r}(w,y)=(w^r,y)^m.
\end{equation}

\paragraph{The triangle $S_h(\alpha)$.}
Let $h$ be a positive integer and $\alpha$ a positive rational number.
Write $S_h(\alpha)$ for the set of lattice points in the triangle with vertices
$(h,0)$, $(0,\alpha h)$, and $(0,(\alpha+9)h)$, with the upper sloping edge
omitted except for the vertex $(h,0)$:
\begin{equation}\label{eq:one-point-triangle}
 S_h(\alpha)=\{(h,0)\}\ \cup\
 \{(h-e,k):1\le e\le h,\ k\in\ZZ,\ \alpha e\le k<(\alpha+9)e\}.
\end{equation}
For $1\le e\le h$, the column $j=h-e$ contains exactly $9e$ points:
the half-open interval $[\alpha e,(\alpha+9)e)$ has integer length $9e$ and hence
contains $9e$ integers. Including the vertex $(h,0)$, we obtain
\begin{equation}\label{eq:one-point-dimension}
 |S_h(\alpha)|=1+\frac{9h(h+1)}2=\binom{3h+2}{2}.
\end{equation}

\begin{proposition}\label{prop:one-point-triangle}
Let $p,q,h,K$ be positive integers. Set $\alpha=\frac{K}{ph}-9$. Suppose $q>9p$ and
\[
 K\ge3h\sqrt{pq}.
\]
Then $\alpha>0$, and the restriction matrix
\eqref{eq:limit-matrix} for $I=I_K^{p,q}$ and $S=S_h(\alpha)$ has
rank $|S_h(\alpha)|$.
\end{proposition}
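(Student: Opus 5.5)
By Lemma~\ref{lem:interpolation-duality} and the remark after \eqref{eq:limit-matrix}, the matrix $B_0=(j^ak^b)$ is the transpose of the evaluation matrix of the monomials $u^av^b$, $(a,b)\in I_K^{p,q}$, at the points $(j,k)\in S_h(\alpha)$. So it suffices to show that $\Span\{u^av^b:(a,b)\in I_K^{p,q}\}$ interpolates arbitrary values on $S_h(\alpha)$. I plan to exhibit a Young diagram $I'\subset I_K^{p,q}$ for which Lemma~\ref{lem:row-interpolation} already gives interpolation on $S_h(\alpha)$. Enlarging the space of polynomials from $V_{I'}$ to the span over $I_K^{p,q}$ keeps the evaluation map surjective, so this proves the claim. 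First I dispose of $\alpha>0$: since $q>9p$, we have $K\ge 3h\sqrt{pq}>3h\cdot 3p=9ph$, hence $\alpha=K/(ph)-9>0$.

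The choice of lines is essential. Grouping $S_h(\alpha)$ along the vertical lines $j=\mathrm{const}$ gives $9e$ points on the line $j=h-e$ and one point on $j=h$. Lemma~\ref{lem:row-interpolation} then forces the monomial $u^h$, which requires $qh<K$. This fails in general because $q>9p$. Instead I apply Lemma~\ref{lem:row-interpolation} along the horizontal lines $k=\mathrm{const}$, with $k$ playing the role of $y$ (exponent $b$) and $j$ the role of $x$ (exponent $a$). Let $n(k)$ be the number of points of $S_h(\alpha)$ on the line $k$. After sorting the rows by decreasing $n(k)$, the lemma produces the diagram $I'$ whose column $t$ (monomials $u^t v^b$) has height $c_t=\#\{k:n(k)>t\}$. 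Since $I_K^{p,q}$ is a Young diagram with column heights $A_t$ from \eqref{eq:one-point-column-length}, the inclusion $I'\subset I_K^{p,q}$ reduces to the inequalities $c_t\le A_t$ for all $t\ge0$.

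The main step is to bound $c_t$. A row $k$ with $n(k)\ge t+1$ contains points with $t+1$ distinct values $e_{\min}<\dots<e_{\max}\le h$ satisfying $\alpha e\le k<(\alpha+9)e$. This gives two constraints:
\[
k<(\alpha+9)e_{\min}\le(\alpha+9)(h-t),\qquad k\ge\alpha e_{\max}\ge\alpha(e_{\min}+t).
\]
Combining the second with $k<(\alpha+9)e_{\min}$ yields $e_{\min}>\alpha t/9$, and hence $k>\alpha(\alpha+9)t/9$. So the $k$ with $n(k)>t$ lie in an interval of length about $(\alpha+9)\bigl(h-t(\alpha+9)/9\bigr)$. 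The target is $A_t\ge(K-qt)/p=(\alpha+9)h-qt/p$. Comparing the two, the required inequality is $(\alpha+9)^2/9\ge q/p$. Since $\alpha+9=K/(ph)$, this is exactly the hypothesis $K\ge3h\sqrt{pq}$; this is where the square root enters.

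The remaining obstacle is bookkeeping at the boundary. I must count integers in half-open intervals exactly, using the ceilings in $A_t$ and the integrality of $9e$ noted after \eqref{eq:one-point-triangle}. I must also treat the vertex $(h,0)$ separately: it lies on row $k=0$, which contains no other point since $\alpha>0$, so it contributes only to column $t=0$, where the bound $c_0\le A_0=\lceil K/p\rceil$ has to absorb it. Finally, for large $t$, both $c_t$ and $A_t$ vanish together.
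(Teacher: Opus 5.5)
Your proposal is correct and matches the paper's proof: you apply Lemma~\ref{lem:row-interpolation} along the horizontal rows $k=\mathrm{const}$, reduce the containment of the resulting diagram in $I_K^{p,q}$ to the column inequalities $c_t\le A_t$, and confine the heights carrying at least $t+1$ points to the open interval $\bigl(\alpha(\alpha+9)t/9,\,(\alpha+9)(h-t)\bigr)$, whose length is at most $(K-qt)/p$ thanks to $K\ge3h\sqrt{pq}$. The only difference is cosmetic (the paper reads this interval off the piecewise-linear slice width of the triangle, while you use $e_{\min}$ and $e_{\max}$), and the boundary bookkeeping you defer is immediate: an open interval of length $L>0$ contains at most $\lceil L\rceil$ integers, and for $t=0$ all occupied heights, including the vertex row $k=0$, lie in $[0,K/p)$, which contains exactly $\lceil K/p\rceil=A_0$ integers.
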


\begin{proof}
Put $\sigma=K/(ph)$, so that $\alpha=\sigma-9$.
The hypotheses give
\begin{equation}\label{eq:one-point-balance}
 \sigma\ge3\sqrt{q/p}>9,\qquad \frac{\sigma^2}{9}\ge\frac qp.
\end{equation}
Put $S=S_h(\alpha)$ and $N_k=\#\{j:(j,k)\in S\}$.
List the nonzero row lengths $N_k$ of $S$ in decreasing order to form a
Young diagram. The column $t$ of this diagram has length $\#\{k:N_k\ge t+1\}$.
Thus this diagram is contained in $I_K^{p,q}$ precisely when
\begin{equation}\label{eq:one-point-row-test}
 \#\{k:N_k\ge t\}\le A_{t-1}\qquad(t\ge1),
\end{equation}
where $A_{t-1}$ is the height of the $(t-1)$-st column of $I_K^{p,q}$ given by \eqref{eq:one-point-column-length}.
Under this condition, Lemma~\ref{lem:row-interpolation} gives
interpolation of arbitrary values on $S$ by polynomials in the span
of $u^av^b$, $(a,b)\in I_K^{p,q}$.
By \eqref{eq:limit-matrix}, the restriction matrix $B_0$ with $I=I_K^{p,q}$
therefore has full column rank.

It remains to verify \eqref{eq:one-point-row-test}.
For $0<k<\sigma h$, the horizontal slice of the triangle $S_h(\alpha)$ is
\[
 [\max\{0,h-k/\alpha\},\ h-k/\sigma),
\]
whose width is
\[
 \min\left\{\frac{9k}{\alpha\sigma},\,h-\frac{k}{\sigma}\right\}.
\]
The graph of the width $w(k)$ has a peak at $(\alpha h,9h/\sigma)$; see
Figure~\ref{fig:one-point-width}.

\begin{figure}[!htb]
\centering
\begin{tikzpicture}[x=.90cm,y=.70cm,font=\small,>=Stealth]
  \fill[diagOrange!9] (1.2,1.2)--(3,3)--(7.2,1.2)--cycle;
  \draw[->,diagGray!65] (-.2,0)--(10.65,0) node[right] {$k$};
  \draw[->,diagGray!65] (0,-.15)--(0,3.55) node[above] {width};
  \draw[diagGray!55,densely dashed] (0,3)--(3,3)--(3,0);
  \draw[diagBlue,very thick] (0,0)--(3,3)--(10,0);
  \node[left=4pt] at (0,3) {$9h/\sigma$};
  \node[below=4pt] at (3,0) {$\alpha h$};
  \node[below=4pt] at (10,0) {$\sigma h$};
  \node[below left=3pt] at (0,0) {$0$};
  \draw[diagOrange,dashed] (0,1.2)--(7.2,1.2);
  \node[left=4pt,text=diagOrange] at (0,1.2) {$t-1$};
  \draw[diagOrange!75,densely dashed] (1.2,1.2)--(1.2,0);
  \draw[diagOrange!75,densely dashed] (7.2,1.2)--(7.2,0);
  \fill[diagBlue] (1.2,1.2) circle[radius=1.7pt];
  \fill[diagBlue] (7.2,1.2) circle[radius=1.7pt];
  \draw[diagOrange,very thick] (1.2,0)--(7.2,0);
  \draw[diagOrange,fill=white,thick] (1.2,0) circle[radius=2pt];
  \draw[diagOrange,fill=white,thick] (7.2,0) circle[radius=2pt];
  \node[below=16pt,font=\footnotesize] at (1.2,0)
    {$\dfrac{\alpha\sigma}{9}(t-1)$};
  \node[below=16pt,font=\footnotesize] at (7.2,0)
    {$\sigma(h-t+1)$};
\end{tikzpicture}
\caption{The slice width as a function of height.}
\label{fig:one-point-width}
\end{figure}

For $t\ge2$, a slice containing at least $t$ lattice points has width
strictly greater than $t-1$, since its right endpoint is excluded.
Thus every height with $N_k\ge t$ lies in the open interval
\[
 \frac{\alpha\sigma}{9}(t-1)<k<\sigma(h-t+1).
\]
This interval is illustrated in Figure~\ref{fig:one-point-width}. When nonempty, it has length
\[
 \sigma h-\frac{\sigma^2}{9}(t-1)
 \le\frac{K-q(t-1)}p,
\]
by \eqref{eq:one-point-balance}. An open interval of positive length
$L$ contains at most $\lceil L\rceil$ integers, so there are at most
$A_{t-1}$ such heights by \eqref{eq:one-point-column-length}. If the interval is empty, the bound is automatic.
This proves \eqref{eq:one-point-row-test} for every column of positive index.
For $t=1$, all occupied heights, including the exceptional height zero,
belong to $[0,\sigma h)=[0,K/p)$, which contains
$\lceil K/p\rceil=A_0$ integers. This completes the proof.
\end{proof}

The argument allows equality in the second inequality of
\eqref{eq:one-point-balance}. Thus the proposition includes the case
$K=3h\sqrt{pq}$ whenever this number is an integer. 

\begin{example}[Sixteen points]\label{ex:one-point-square}
Take $(p,q)=(1,16)$, $h=4$, and $K=48=3h\sqrt{pq}$.
Then $\alpha=3$. As shown in
Figure~\ref{fig:one-point-triangle}, $S_4(3)$ has $91$ points,
while $I_{48}^{1,16}$ has $96$ points.
Proposition~\ref{prop:one-point-triangle} says that the resulting
$96\times91$ matrix has rank $91$.
After the transfer to the plane in Section~\ref{sec:plane-transfer},
this rules out a curve of degree $12$
passing through $16$ general points with multiplicity at least three.
This is the equality case
$12=3\sqrt{16}$ in Theorem~\ref{thm:plane}.
Example~\ref{ex:theta-sixteen} gives the corresponding theta series.
\end{example}

\begin{figure}[htbp]
\centering
\begin{minipage}[t]{.49\linewidth}
\centering
{\small\bfseries Lattice points $S_4(3)$}\par\smallskip
\begin{tikzpicture}[x=.88cm,y=.078cm,font=\small,>=Stealth]
  \fill[diagBlue!6] (0,12)--(0,48)--(4,0)--cycle;
  \draw[->,diagGray!65] (-.15,0)--(4.65,0) node[right] {$j$};
  \draw[->,diagGray!65] (0,-2)--(0,53) node[above] {$k$};
  \draw[diagBlue,thick] (0,48)--(0,12)--(4,0);
  \draw[diagBlue,dashed,thick] (0,48)--(4,0);
  \foreach \j/\lo/\hi in {0/12/47,1/9/35,2/6/23,3/3/11}{
    \foreach \k in {\lo,...,\hi}
      \fill[diagBlue] (\j,\k) circle[radius=.9pt];
  }
  \draw[diagBlue,fill=white] (0,48) circle[radius=2pt];
  \fill[diagOrange] (4,0) circle[radius=2.3pt];
  \foreach \j in {1,2,3,4} \node[below=4pt] at (\j,0) {$\j$};
  \foreach \k in {12,24,36,48} \node[left=4pt] at (0,\k) {$\k$};
  \node[below left=3pt] at (0,0) {$0$};
\end{tikzpicture}
\end{minipage}\hfill
\begin{minipage}[t]{.49\linewidth}
\centering
{\small\bfseries Monomials $I_{48}^{1,16}$}\par\smallskip
\begin{tikzpicture}[x=1.15cm,y=.078cm,font=\small,>=Stealth]
  \draw[->,diagGray!65] (-.15,0)--(3.5,0) node[right] {$a$};
  \draw[->,diagGray!65] (0,-2)--(0,53) node[above] {$b$};
  \draw[diagOrange!65,dashed] (0,48)--(3,0);
  \foreach \horizontalindex/\hi in {0/47,1/31,2/15}{
    \foreach \verticalindex in {0,...,\hi}
      \fill[diagOrange] (\horizontalindex,\verticalindex) circle[radius=.9pt];
  }
  \foreach \horizontalindex in {1,2,3} \node[below=4pt] at (\horizontalindex,0) {$\horizontalindex$};
  \foreach \verticalindex in {16,32,48} \node[left=4pt] at (0,\verticalindex) {$\verticalindex$};
  \node[below left=3pt] at (0,0) {$0$};
\end{tikzpicture}
\end{minipage}
\caption{The $91$ lattice points of $S_4(3)$ and the $96$ monomials
indexed by $I_{48}^{1,16}$.}
\label{fig:one-point-triangle}
\end{figure}

\begin{example}[Ten points]\label{ex:one-point-nonsquare}
Take $(p,q)=(1,10)$, $h=3$, and $K=30$.
Here $3h\sqrt{pq}=9\sqrt{10}<30$, and $\alpha=1$.
Figure~\ref{fig:one-point-ten} shows the $55$ points of $S_3(1)$
and the $60$ monomials indexed by $I_{30}^{1,10}$.
Proposition~\ref{prop:one-point-triangle} gives a $60\times55$
matrix of rank $55$.
Since $\mathfrak J_{30}^{1,10}=(w^{10},y)^3$, this is the interpolation
calculation for ten points of multiplicity three.
Example~\ref{ex:one-point-nagata} explains how it rules out
plane curves of degree $9$ through $10$ general triple points.
Example~\ref{ex:one-point-contact}
extends these conditions to arbitrary multiplicity at one point.
\end{example}

\begin{figure}[!htb]
\centering
\begin{minipage}[t]{.49\linewidth}
\centering
{\small\bfseries Lattice points $S_3(1)$}\par\smallskip
\begin{tikzpicture}[x=1.15cm,y=.12cm,font=\small,>=Stealth]
  \fill[diagBlue!6] (0,3)--(0,30)--(3,0)--cycle;
  \draw[->,diagGray!65] (-.15,0)--(3.65,0) node[right] {$j$};
  \draw[->,diagGray!65] (0,-1.3)--(0,34) node[above] {$k$};
  \draw[diagBlue,thick] (0,30)--(0,3)--(3,0);
  \draw[diagBlue,dashed,thick] (0,30)--(3,0);
  \foreach \j/\lo/\hi in {0/3/29,1/2/19,2/1/9}{
    \foreach \k in {\lo,...,\hi}
      \fill[diagBlue] (\j,\k) circle[radius=1.1pt];
  }
  \draw[diagBlue,fill=white] (0,30) circle[radius=2pt];
  \fill[diagOrange] (3,0) circle[radius=2.3pt];
  \foreach \j in {1,2,3} \node[below=4pt] at (\j,0) {$\j$};
  \foreach \k in {3,10,20,30} \node[left=4pt] at (0,\k) {$\k$};
  \node[below left=3pt] at (0,0) {$0$};
\end{tikzpicture}
\end{minipage}\hfill
\begin{minipage}[t]{.49\linewidth}
\centering
{\small\bfseries Monomials $I_{30}^{1,10}$}\par\smallskip
\begin{tikzpicture}[x=1.15cm,y=.12cm,font=\small,>=Stealth]
  \draw[->,diagGray!65] (-.15,0)--(3.65,0) node[right] {$a$};
  \draw[->,diagGray!65] (0,-1.3)--(0,34) node[above] {$b$};
  \draw[diagOrange!65,dashed] (0,30)--(3,0);
  \foreach \horizontalindex/\hi in {0/29,1/19,2/9}{
    \foreach \verticalindex in {0,...,\hi}
      \fill[diagOrange] (\horizontalindex,\verticalindex) circle[radius=1.1pt];
  }
  \foreach \horizontalindex in {1,2,3} \node[below=4pt] at (\horizontalindex,0) {$\horizontalindex$};
  \foreach \verticalindex in {10,20,30} \node[left=4pt] at (0,\verticalindex) {$\verticalindex$};
  \node[below left=3pt] at (0,0) {$0$};
\end{tikzpicture}
\end{minipage}
\caption{The $55$ lattice points of $S_3(1)$ and the $60$ monomials
indexed by $I_{30}^{1,10}$ in Example~\ref{ex:one-point-nonsquare}.}
\label{fig:one-point-ten}
\end{figure}

\section{Interpolation on the normal ruled surface}
\label{sec:elliptic-ruled}

Let $C\subset\PP^2$ be a smooth plane cubic, with normal bundle
$M=N_{C/\PP^2}=\OO_C(3)$ of degree nine. 
Let $\Tot(M)$ be its total space.
Its projective completion is a ruled surface
\[
 \pi:\bar\Sigma=\PP_C(\OO_C\oplus M)\longrightarrow C.
\]
We use the convention that projective bundles parametrize lines rather than quotients.
The summands $\OO_C$ and $M$ of $\OO_C\oplus M$ define the zero section $C_0$ and the
infinity section $C_\infty$ of $\bar\Sigma$, respectively, so
$\bar\Sigma\setminus C_\infty=\Tot(M)$. The projective bundle formulas give
\begin{equation}\label{eq:normal-sections}
 \OO(C_0)=\OO(C_\infty)\otimes\pi^*M,\qquad
 C_0^2=9,\quad C_\infty^2=-9,\quad C_0\cdot C_\infty=0.
\end{equation}
See \cite[II, Proposition~7.11; V, Propositions~2.3 and~2.9]{hartshorne1977}.

\begin{maintheorem}\label{thm:ruled}
Let $p,q,h,K$ be positive integers with $q>9p$ and
$K\ge3h\sqrt{pq}$. There exist a smooth plane cubic $C$, a point
$x\in\Tot(M)$, and local holomorphic coordinates $(w,y)$ centered
at $x$ such that
\begin{equation}\label{eq:one-point-ruled-vanishing}
 H^0\!\left(\bar\Sigma,\OO(hC_0)\otimes
                      \mathfrak J_K^{p,q}(w,y)\right)=0.
\end{equation}
Here $\mathfrak J_K^{p,q}(w,y)$ denotes the ideal sheaf that
extends the local ideal by the unit ideal away from $x$.
\end{maintheorem}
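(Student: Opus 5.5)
We plan to prove Theorem~\ref{thm:ruled} by reducing it to Proposition~\ref{prop:one-point-triangle} and the perturbation principle stated after \eqref{eq:limit-matrix}. The argument has three steps.

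\textbf{Step 1: sections on $\Tot(M)$.} Every section of $\OO(hC_0)$ restricts to $\Tot(M)$. Writing $\OO(hC_0)=\OO(hC_\infty)\otimes\pi^*M^h$ by \eqref{eq:normal-sections}, we get a decomposition
\[
 H^0(\bar\Sigma,\OO(hC_0))=\bigoplus_{j=0}^{h}H^0(C,M^{j})\cdot z^{h-j},
\]
where $z$ is the fiber coordinate that is linear along the fibers of $\Tot(M)$ and vanishes on $C_0$. The summand with $j=0$ is spanned by $z^h$ alone. For $j\ge1$, the line bundle $M^{j}$ has degree $9j$, so it has $9j$ sections, which are classically theta functions of level $9j$ on $C=\CC/(\ZZ+\tau\ZZ)$. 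The total dimension is $1+\sum 9j=\binom{3h+2}{2}=|S_h(\alpha)|$. This equality is the first sign that the basis should be indexed by $S_h(\alpha)$: the column $h-e$ of $S_h(\alpha)$ holds $9e$ points and corresponds to the summand with $j=e$, while the vertex $(h,0)$ corresponds to $z^h$.

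\textbf{Step 2: coordinates and bases.} Choose local coordinates at a point $x$ of $\Tot(M)$ off $C_0$. Let $w$ be the flat coordinate on $C$, suitably rescaled, and $y=\log$ of the fiber coordinate, suitably rescaled. Choose for each summand a basis of theta functions
\[
 \theta_{j,k}(u)=\sum_n q^{\text{quadratic in }n}\,e^{2\pi i(\cdots)u},\qquad \alpha e\le k<(\alpha+9)e,
\]
where the summation index has been shifted so that the term $n=0$ carries the exponent $k$. The aim is to arrange the indexing, with the shift $\alpha e$, and the rescaling of coordinates so that, after multiplying each basis element by a suitable constant and a common local frame, its local representative has the form
\[
 e^{kw+jy}\bigl(1+o(1)\bigr)\qquad\text{as }q=e^{\pi i\tau}\to0,
\]
that is, as the cubic degenerates to a nodal cubic. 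The error must be small together with all derivatives indexed by $I_K^{p,q}$. The ratio of weights $p,q$ is to be absorbed by scaling the point $x$ (through $|z|$) together with the lattice, so that the dominant term in each theta series is exactly the one with exponent in the window $[\alpha e,(\alpha+9)e)$, with the other terms exponentially smaller. The restriction matrix to $Z$ defined by $\mathfrak J_K^{p,q}$ then converges to $B_0$ of \eqref{eq:limit-matrix} with $S=S_h(\alpha)$ and $I=I_K^{p,q}$, after a diagonal rescaling of the rows and columns; this rescaling does not affect injectivity.

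\textbf{Step 3: conclusion.} Proposition~\ref{prop:one-point-triangle} shows that $B_0$ has full column rank $|S_h(\alpha)|$. Hence some maximal minor is nonzero, and it stays nonzero for $q$ small. So restriction of $H^0(\bar\Sigma,\OO(hC_0))$ to $Z$ is injective, which is exactly the vanishing \eqref{eq:one-point-ruled-vanishing}. For small nonzero $q$, the curve $C$ is smooth and every elliptic curve embeds as a plane cubic with $M=\OO_C(3)$; so it suffices to choose the degree-$9$ line bundle $M$ compatible with the chosen theta characteristic.

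\textbf{Main obstacle.} The hard part is Step 2: the uniform asymptotics. We must show that, after the rescalings, a single exponential term dominates each theta series, uniformly for all derivatives up to the orders in $I_K^{p,q}$. We must also check that the dominant exponents are precisely the lattice points $(j,k)\in S_h(\alpha)$, with the window $\alpha e\le k<(\alpha+9)e$ matching the $9e$ theta functions of level $9e$. We would first try to compare the quadratic exponents $\pi i\tau(n+k/9e)^2\cdot9e$ directly, placing $x$ so that the linear term in $n$ selects the window. Any leftover unipotent change of basis within a column is harmless, since it is triangular.
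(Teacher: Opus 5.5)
Your outline follows the paper's route: decompose $H^0(\bar\Sigma,\OO(hC_0))$ by fiber degree, degenerate the cubic to a nodal one, take a theta basis whose leading exponents are the points of $S_h(\alpha)$, show the restriction matrix converges to $B_0$, and apply Proposition~\ref{prop:one-point-triangle}. The gap is Step~2. It carries essentially all of the theorem's content beyond that proposition, yet you state it only as a goal. Moreover, the mechanism you propose (completing the square so that the linear term selects the window) cannot by itself produce the half-open windows $[\alpha e,(\alpha+9)e)$.

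For a degree-$d$ theta series in a moving chart, the term $e^{kw}$ dominates all others exactly when $k$ lies in an \emph{open} window $(U,U+d)$. In the paper's normalization (bundle $\mathcal B_{d,\tau^{U+d/2}}$, chart $z=\sqrt\tau e^w$), the $\tau$-exponents of the terms $\pm s$ are $s(k-U)+\frac{ds(s-1)}{2}$ and $s(U+d-k)+\frac{ds(s-1)}{2}$. Suppose $\alpha e\in\ZZ$ (as in Example~\ref{ex:one-point-square}, where $\alpha=3$) and you take $U=\alpha e$. Then the residue class of $\alpha e$ modulo $9e$ has no representative in the window, and that series tends to $e^{\alpha e\,w}+e^{(\alpha+9)e\,w}$. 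So your claim that the other terms are exponentially smaller fails, the limit matrix is not $B_0$ for $S_h(\alpha)$, and Proposition~\ref{prop:one-point-triangle} no longer applies. Your remark about unipotent changes of basis within a column does not help, since $e^{(\alpha+9)e\,w}$ lies outside the span of that column.

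Moreover, at a fixed point the bundle $M^e$ has multiplier $\gamma^e$ and degree $9e$, so the offsets must have the form $U_e=eu$ for a single real $u$. Any repair must therefore work for all $1\le e\le h$ at once. The paper takes $u=\alpha-\delta$ with $\alpha=A/D$ in lowest terms and $\delta=1/(2Dh)$. Then $0<e\delta<1/D$, and \eqref{eq:one-point-theta-interval} holds for every column.

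To finish Step~2 you must pin down the bundle explicitly. The paper uses the Tate curve $C_\tau$ and $M_\tau=\mathcal B_{9,\tau^{\alpha+9/2-\delta}}$. Then $M_\tau^e=\mathcal B_{9e,\tau^{9e/2+e(\alpha-\delta)}}$, and Lemma~\ref{lem:theta-limit} applies with $U=e(\alpha-\delta)$ and $d=9e$; note that the multiplier moves with $\tau$. Your alternative, keeping the bundle fixed and placing $x$ at a $\tau$-dependent height $|z_0|=\tau^c$ in the covering frame, is equivalent, with $c$ determined by the same requirement $U_e=e(\alpha-\delta)$.

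Two further pieces are still missing.
\begin{itemize}
\item The derivative estimates. These follow from the geometric-series bound on a slightly larger disk together with Cauchy's estimate \eqref{eq:theta-derivative-estimate}.
\item The fact that this particular $M$ is $\OO_C(3)$ for some plane embedding. Knowing that every elliptic curve is a plane cubic is not enough; you need $M=L^3$ with $\deg L=3$. The paper exhibits $L_\tau=\mathcal B_{3,\tau^{3/2+(\alpha-\delta)/3}}$, and in general this follows from divisibility of $\operatorname{Pic}^0$.
\end{itemize}

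Finally, $p$ and $q$ enter only through $\alpha=K/(ph)-9$ and Proposition~\ref{prop:one-point-triangle}. No rescaling by the weights is needed, and diagonal rescalings of $(w,y)$ leave $\mathfrak J_K^{p,q}$ unchanged anyway.
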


In the next sections, we will show that the 
specialization $p=1$, $q=r$, $K=rm$ gives
Theorem~\ref{thm:plane} (Nagata Conjecture) and that the general case gives
Theorem~\ref{thm:one-point} of Section~\ref{sec:one-point}
about the weighted blow-up of $\PP^2$ with weights $(p,q)$ at one point for very general local
parameters.

\paragraph{Strategy of the proof.}
Let $v$ denote the tautological section
of $\pi^*M$ on $\Tot(M)$, which becomes a vertical fiber coordinate in a local frame of $M$ on $C$.
On $\Tot(M)$, we have $\OO(hC_0)\cong\pi^*M^h$ by \eqref{eq:normal-sections}.
We have 
$$H^0(\Tot(M), \pi^*M^h)=H^0(C,\pi_*\pi^*M^h)\cong H^0(C,M^h\otimes\pi_*\cO)\cong\bigoplus_{j=0}^\infty H^0(C,M^{h-j}).$$
More precisely, every global section of $\pi^*M^h$ on $\Tot(M)$ can be written uniquely as 
$\sum\limits_{j=0}^\infty (\pi^*g_j)v^j$, where
$g_j\in H^0(C,M^{h-j})$.
On $\bar\Sigma$, 
we have $\OO(hC_0)\cong(\pi^*M^h)(hC_\infty)$ by \eqref{eq:normal-sections}.
Thus every global section of $\OO(hC_0)$ on $\bar\Sigma$
restricts to $\Tot(M)$ as a unique sum
\begin{equation}\label{eq:one-point-coefficients}
 F=\sum_{j=0}^h (\pi^*g_j)v^j,\qquad
 g_j\in H^0(C,M^{h-j}).
\end{equation}
The summation only goes up to $h$ because the order of the poles along $C_\infty$
is bounded from above by $h$.
The Riemann--Roch theorem gives $h^0(C,M^{h-j})=9(h-j)$ for $j<h$, while
$h^0(C,\OO_C)=1$ \cite[IV, Theorem~1.3]{hartshorne1977}. Hence
\[
 h^0(\bar\Sigma,\OO(hC_0))=1+\frac{9h(h+1)}2=\binom{3h+2}{2}.
\]
This is the cardinality of the set $S_h(\alpha)$ by \eqref{eq:one-point-dimension}.
For a carefully chosen family of cubic curves degenerating to a nodal cubic,
we will construct 
a point $x\in\Tot(M)$, local holomorphic coordinates $(w,y)$ centered at~$x$, and 
a basis of $H^0(\bar\Sigma,\OO(hC_0))$.
In the notation of \eqref{eq:one-point-coefficients}, the basis elements will be
 of the form $(\pi^*g_k)v^j$ with 
$g_k\in H^0(C,M^{h-j})$
indexed by points $(j,k)\in S_h(\alpha)$.
The local
expressions of the basis elements at the point $x$ will converge to $e^{kw+jy}$ as the cubic degenerates, 
with every partial derivative converging to the corresponding derivative of this exponential.
The restriction matrices will  converge to the matrix $B_0$ in
\eqref{eq:limit-matrix}, and Proposition~\ref{prop:one-point-triangle}
will give \eqref{eq:one-point-ruled-vanishing}.

\bigskip

The GAGA theorems allow us to construct line bundles and compute global sections
in the holomorphic category \cite[\S12, Theorems~1--3]{serre1956}.
For $0<\tau<1$, consider the Tate curve
\cite[V, \S1]{silverman1994}
\[
 C_\tau=\CC^*/\tau^\ZZ
       \simeq\CC/(2\pi i\ZZ+(\log\tau)\ZZ).
\]
The additive and multiplicative quotients
are isomorphic by the exponential map.
We denote the quotient map $\CC^*\to C_\tau$ by $z\mapsto [z]$.
The annulus $\tau\le |z|\le1$ is a fundamental domain in $\CC^*$, with its outer
boundary identified with its inner boundary by the dilation $z\mapsto\tau z$.

For $d\in\ZZ$ and $\gamma\in\CC^*$, let $\mathcal B_{d,\gamma}$
be the line bundle on $C_\tau$ obtained from the trivial bundle
$\CC^*\times\CC$ on $\CC^*$ by the $\ZZ$-action generated by
$
 (z,t)\longmapsto(\tau z,\gamma z^{-d}t)$.
In the covering frame, a holomorphic section of
$\mathcal B_{d,\gamma}$ lifts uniquely to a holomorphic function
$f\colon\CC^*\to\CC$ satisfying
\[
 f(\tau z)=\gamma z^{-d}f(z).
\]
Functions of this form are called {\em multiplicative theta functions of order $d$}.
Conversely, every such function descends to a section.
Multiplication of the covering frames gives
\[
 \mathcal B_{d,\gamma}\otimes\mathcal B_{d',\gamma'}
 \simeq\mathcal B_{d+d',\gamma\gamma'}.
\]
The multiplicative theta function $ \Theta_\tau(z)$ is defined as the infinite product
\begin{equation}\label{ra:theta-product}
 \Theta_\tau(z)=\prod_{s=0}^{\infty}(1-\tau^sz)
                     \prod_{s=1}^{\infty}(1-\tau^s/z),
\end{equation}
which converges normally on compact annuli,
has simple zeros exactly at $\tau^\ZZ$, and satisfies
\[
 \Theta_\tau(\tau z)=-z^{-1}\Theta_\tau(z),
\]
as described in \cite[\S4.1.2]{rosengren2020}.
Thus $\Theta_\tau(-z/\gamma)$ lifts a section of
$\mathcal B_{1,\gamma}$ with divisor $[-\gamma]\in C_\tau$.
In particular, $\deg\mathcal B_{1,\gamma}=1$, and the tensor
product rule gives $\deg\mathcal B_{d,\gamma}=d$ for every integer $d$.

\begin{lemma}\label{lem:theta-basis}
If $d>0$, then $H^0(C_\tau,\mathcal B_{d,\gamma})$ has
dimension $d$. Write the equivariant lift of a section as a
Laurent series
$
 f(z)=\sum\limits_{k\in\ZZ}c_kz^k
$.
Its coefficients satisfy the recurrence
\begin{equation}\label{qwrfqerqergtq3t}
c_{k+d}=\gamma^{-1}\tau^kc_k
\end{equation}
for every $k\in\ZZ$. A basis is given by the sections whose
equivariant lifts are the Laurent theta series
\begin{equation}\label{ra:theta-basis-functions}
 \vartheta_k^{d,\gamma}(z)
   =\sum_{s\in\ZZ}\gamma^{-s}
               \tau^{sk+{ds(s-1)\over 2}}z^{k+ds},
\end{equation}
where we choose one $k$ in each residue class modulo $d$.
\end{lemma}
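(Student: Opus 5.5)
Since $f$ is holomorphic on $\CC^*$, it has a Laurent expansion $f(z)=\sum_k c_kz^k$ that converges on all of $\CC^*$, and this expansion is unique. The plan is to turn the functional equation into a recurrence on coefficients and then read off both the dimension and the basis from it.

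\textbf{Step 1: the recurrence.} Substitute the series into $f(\tau z)=\gamma z^{-d}f(z)$. The left side is $\sum_k c_k\tau^k z^k$. The right side is $\gamma\sum_k c_k z^{k-d}=\gamma\sum_k c_{k+d}z^{k}$. Comparing coefficients of $z^k$ gives $c_k\tau^k=\gamma c_{k+d}$, which is \eqref{qwrfqerqergtq3t}. Conversely, if a convergent Laurent series satisfies the recurrence, the same computation read backwards shows it satisfies the functional equation.

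\textbf{Step 2: solve the recurrence within a residue class.} Fix $k$. Iterating the recurrence forward gives
\[
c_{k+ds}=\gamma^{-s}\tau^{sk+ds(s-1)/2}c_k \quad\text{for } s\ge0,
\]
because the exponents add up as $\sum_{i<s}(k+di)=sk+d\binom s2$. Running the recurrence backwards, $c_{k}=\gamma\tau^{-k}c_{k+d}$, gives the same closed formula for negative $s$, since the quadratic expression extends to all $s\in\ZZ$. Hence the whole residue class $k+d\ZZ$ of coefficients is determined by $c_k$, and equals $c_k$ times the coefficients of $\vartheta_k^{d,\gamma}$.

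\textbf{Step 3: convergence.} One must check that \eqref{ra:theta-basis-functions} converges on $\CC^*$. This is the only analytic point. Since $0<\tau<1$ and $d>0$, the exponent of $\tau$ grows like $ds^2/2$. So for $|z|$ in any compact annulus, the terms are bounded by $C\rho^{s^2}$ with $\rho<1$ uniformly, and the series converges normally. Thus each $\vartheta_k^{d,\gamma}$ is holomorphic on $\CC^*$, and by Step 1 it satisfies the functional equation.

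\textbf{Step 4: dimension and basis.} Choose representatives $k_1,\dots,k_d$ of the residue classes modulo $d$. By Step 2, every lift equals $\sum_i c_{k_i}\vartheta_{k_i}^{d,\gamma}$. The $\vartheta_{k_i}^{d,\gamma}$ have disjoint monomial supports, so they are linearly independent. Therefore they form a basis and the dimension is $d$. This is consistent with Riemann--Roch for a line bundle of degree $d>0$ on an elliptic curve; the degree was computed before the lemma, so that check is available but not needed.

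We also use that sections correspond bijectively to equivariant lifts, which was stated before the lemma. Taking a different representative $k'=k+dt$ changes $\vartheta$ only by a nonzero scalar, which is visible from the closed formula, so the basis is independent of the choice up to scaling. I expect no real obstacle; the mild care points are the backward iteration for negative $s$ and the uniform convergence estimate.
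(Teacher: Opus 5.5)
Your proposal is correct and follows essentially the same route as the paper. Comparing Laurent coefficients gives the recurrence, one coefficient in each residue class determines the rest, and the Gaussian decay of $\tau^{ds(s-1)/2}$ makes each theta series converge normally on compact annuli, so it descends to a section. You also spell out two points the paper leaves implicit: the closed form for negative $s$, and linear independence from disjoint monomial supports.
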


\begin{proof}
Comparing coefficients in the Laurent series for
$f(\tau z)=\gamma z^{-d}f(z)$ gives the recurrence \eqref{qwrfqerqergtq3t}.
Thus one coefficient in each residue class modulo $d$ determines all the others.
These $d$ coefficients can be chosen arbitrarily. Indeed,
on a compact annulus, the terms of each series
\eqref{ra:theta-basis-functions} are bounded by
$c_1\exp(-c_2s^2+c_3|s|)$ for constants $c_1,c_2>0$ and $c_3$, since
$d>0$ and $\log\tau<0$. This bound is summable over $s\in\ZZ$:
for large $|s|$, it is at most $c_1\exp(-c_2s^2/2)$.
The sum is holomorphic
\cite[Chapter~2, Theorem~5.2]{stein-shakarchi2003},
and reindexing the series gives
$f(\tau z)=\gamma z^{-d}f(z)$.
Thus $f(z)$ descends to a global section of $\mathcal B_{d,\gamma}$. 
This proves the basis claim.
\end{proof}

As $\tau\to0$, the Tate curve $C_\tau$ degenerates to a nodal curve
whose normalization is $\PP^1$ with $0$ and $\infty$ identified
\cite[\S5.2, Proposition~5.7 and Exercise~47]{hain2011}.
To motivate the subsequent choices, we recall that the nodal degeneration and its vanishing cycle
 can be seen in the {\em plumbing chart}
$$X=z,\quad Y=\tau/z,
 \qquad |X|,|Y|<\varepsilon.
$$
Here we fix $\varepsilon$ such that
 \(0<\varepsilon<1\) and \(0<\tau<\varepsilon^2\).
 These inequalities guarantee that the plumbing chart is contained in the fundamental annulus.
 The latter is diffeomorphic to the cylinder
\[
 z=e^{r+i\theta},\qquad \log\tau\le r\le0,
 \qquad \theta\in\RR/2\pi\ZZ.
\]
The angular circle
$\gamma_\tau(\theta)=[\sqrt\tau e^{i\theta}]$ in $C_\tau$ has coordinates
$(X,Y)=(\sqrt\tau e^{i\theta},\sqrt\tau e^{-i\theta})$ and shrinks into the node $(0,0)$ as $\tau\to0$.
It is the vanishing cycle in Figure~\ref{fig:elliptic-plumbing}.

\begin{figure}[!htbp]
\centering
\begin{tikzpicture}[x=1cm,y=1cm,font=\footnotesize,>=Stealth,
  line cap=round,line join=round]
  \begin{scope}
    \node[font=\small\bfseries] at (2.2,2.70)
      {(a) The  cylinder};
    \node at (2.2,2.20) {$z=e^{r+i\theta}$};
    \fill[diagBlue!6] (.45,-.32) rectangle (3.95,1.12);
    \draw[diagBlue,thick] (.45,1.12)--(3.95,1.12);
    \draw[diagBlue,thick] (.45,-.32)--(3.95,-.32);
    \draw[diagBlue,thick] (.45,.40) ellipse[x radius=.22,y radius=.72];
    \draw[diagBlue,thick] (3.95,.40) ellipse[x radius=.22,y radius=.72];
    \draw[diagGreen,thick,dashed]
      (2.2,1.12) arc[start angle=90,end angle=270,x radius=.22,y radius=.72];
    \draw[diagGreen,very thick]
      (2.2,-.32) arc[start angle=-90,end angle=90,x radius=.22,y radius=.72];
    \node[text=diagGreen] at (2.2,1.33) {$\gamma_\tau$};
    \draw[->,diagOrange,very thick] (.65,.08)--(4.15,.08);
    \fill[diagOrange] (.65,.08) circle[radius=1.7pt];
    \fill[diagOrange] (4.15,.08) circle[radius=1.7pt];
    \node[below] at (.45,-.48) {$\log\tau$};
    \node[below] at (2.2,-.48) {$\frac12\log\tau$};
    \node[below] at (3.95,-.48) {$0$};
  \end{scope}
  \begin{scope}[xshift=5.05cm]
    \node[font=\small\bfseries] at (2.2,2.70)
      {(b) The plumbing chart};
    \node at (2.2,2.20) {$(X,Y)=(z,\tau/z)$};
    \fill[diagBlue!6]
      (.40,1.30)
      .. controls (1.12,1.30) and (1.45,.75) .. (2.20,.75)
      .. controls (2.95,.75) and (3.28,1.30) .. (4.00,1.30)
      --(4.00,-.50)
      .. controls (3.28,-.50) and (2.95,.05) .. (2.20,.05)
      .. controls (1.45,.05) and (1.12,-.50) .. (.40,-.50)--cycle;
    \draw[diagBlue,thick]
      (.40,1.30)
      .. controls (1.12,1.30) and (1.45,.75) .. (2.20,.75)
      .. controls (2.95,.75) and (3.28,1.30) .. (4.00,1.30);
    \draw[diagBlue,thick]
      (.40,-.50)
      .. controls (1.12,-.50) and (1.45,.05) .. (2.20,.05)
      .. controls (2.95,.05) and (3.28,-.50) .. (4.00,-.50);
    \draw[diagBlue,thick] (.40,.40) ellipse[x radius=.19,y radius=.90];
    \draw[diagBlue,thick] (4.00,.40) ellipse[x radius=.19,y radius=.90];
    \node at (.50,1.63) {$|Y|=\varepsilon$};
    \node at (3.90,1.63) {$|X|=\varepsilon$};
    \draw[diagGreen,thick,dashed]
      (2.20,.75) arc[start angle=90,end angle=270,x radius=.14,y radius=.35];
    \draw[diagGreen,very thick]
      (2.20,.05) arc[start angle=-90,end angle=90,x radius=.14,y radius=.35];
    \node[text=diagGreen] at (2.20,-.28) {$\gamma_\tau$};
    \node at (2.20,-.89) {$|X|=|Y|=\sqrt\tau$};
    \end{scope}
  \begin{scope}[xshift=10.30cm]
    \node[font=\small\bfseries] at (2.15,2.70)
      {(c) The nodal fiber};
    \node at (2.15,2.20) {$XY=0$};
    \draw[diagBlue,very thick] (.70,-.35)--(3.60,1.15);
    \draw[diagBlue,very thick] (.70,1.15)--(3.60,-.35);
    \node[above right] at (3.48,1.10) {$Y=0$};
    \node[below right] at (3.48,-.30) {$X=0$};
    \fill[diagGreen] (2.15,.40) circle[radius=3pt];
    \draw[diagGreen] (2.15,.30)--(2.15,-.65);
    \node[text=diagGreen] at (2.15,-.89) {the collapsed cycle};
  \end{scope}
\end{tikzpicture}
\caption{The vanishing cycle $\gamma_\tau$.}
\label{fig:elliptic-plumbing}
\end{figure}
A fixed small $w$-disk centered at the origin gives the \emph{moving chart} 
$z=\sqrt\tau e^w$
of $C_\tau$ centered at
$z_0=\sqrt\tau$ for all sufficiently small $\tau$. This point approaches the node as $\tau\to0$.

We now normalize the theta basis of Lemma~\ref{lem:theta-basis} in the moving chart. 

\begin{lemma}\label{lem:theta-limit}
Let $d$ be a positive integer and let $U\in\RR\setminus\ZZ$.
The bundle $\mathcal B_{d,\tau^{U+d/2}}$ has a basis,
indexed by the $d$ integers $U<k<U+d$, whose equivariant lifts satisfy
\begin{equation}\label{ra:normalized-theta}
 \widehat{\vartheta}_{k,\tau}(\sqrt\tau e^w)
 =\sum_{s\in\ZZ}
      \tau^{s(k-U)+{ds(s-1)\over 2}}e^{(k+ds)w}
 \xrightarrow[\quad\tau\to 0\quad]{} e^{kw}.
\end{equation}
The convergence is uniform on compact subsets of $\CC$, with
every fixed derivative.
\end{lemma}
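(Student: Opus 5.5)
The plan is to take the Laurent theta series $\vartheta_k^{d,\gamma}$ of Lemma~\ref{lem:theta-basis} with $\gamma=\tau^{U+d/2}$, rescale each one by a suitable power of $\tau$, and evaluate at $z=\sqrt\tau e^w$. The resulting series will have the $s=0$ term equal to $e^{kw}$ and all other terms killed by positive powers of $\tau$.

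\emph{Substitution.} Fix an integer $k$ with $U<k<U+d$. By Lemma~\ref{lem:theta-basis}, the $d$ functions $\vartheta_k^{d,\gamma}$ for these $k$ form a basis, since they represent each residue class mod $d$ exactly once. Substituting $\gamma^{-s}=\tau^{-sU-ds/2}$ and $z=\sqrt\tau e^w$ gives the exponent of $\tau$ in the $s$-th term:
\[
 -sU-\tfrac{ds}{2}+sk+\tfrac{ds(s-1)}{2}+\tfrac{k+ds}{2}
 =\tfrac k2+s(k-U)+\tfrac{ds(s-1)}{2}.
\]
Hence $\widehat\vartheta_{k,\tau}:=\tau^{-k/2}\vartheta_k^{d,\gamma}$ is a nonzero scalar multiple of a basis element, so these functions still form a basis. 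Its expansion is exactly the displayed series in \eqref{ra:normalized-theta}. The $s=0$ term is $e^{kw}$.

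\emph{Positivity of the exponents.} Set $E(s)=s(k-U)+ds(s-1)/2$ for $s\neq0$. For $s\ge1$, we have $E(s)\ge s(k-U)>0$, because $k>U$ and $s(s-1)\ge0$. For $s\le-1$, write $s=-t$ with $t\ge1$. Then
\[
 E(s)=t(U-k)+\tfrac{dt(t+1)}{2}\ge t\bigl(U+d-k\bigr)>0,
\]
because $k<U+d$. Set $\delta=\min(k-U,\,U+d-k)>0$. Since $U\notin\ZZ$, both strict inequalities hold for integers $k$. Combining the two cases with the quadratic growth, we get $E(s)\ge\delta|s|+c\,s^2$ for some $c>0$. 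The constant $c$ comes from $d(|s|-1)|s|/2$, adjusted for small $|s|$.

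\emph{Uniform convergence with derivatives.} This is the only step needing care. On a compact set $|w|\le R$, the $s$-th term is bounded by
\[
 \tau^{E(s)}e^{(|k|+d|s|)R}.
\]
Take $\tau\le\tau_1<1$. Then $\tau^{E(s)}\le\tau^{\delta}\,\tau_1^{E(s)-\delta}$ for $s\ne0$. Since $E(s)$ grows quadratically, the remaining sum $\sum_{s\ne0}\tau_1^{E(s)-\delta}e^{(|k|+d|s|)R}$ converges, and its value is a constant independent of $\tau$. So the tail is $O(\tau^\delta)$ uniformly on the compact set, and the series converges uniformly to $e^{kw}$ as $\tau\to0$.

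For derivatives, the $n$-th $w$-derivative multiplies each term by $(k+ds)^n$, which is polynomial in $s$. The same bound applies after absorbing this factor into the Gaussian decay. Alternatively, uniform convergence of holomorphic functions implies convergence of all derivatives on compact sets by the Cauchy estimates.

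The main obstacle is bookkeeping: getting the normalization $\gamma=\tau^{U+d/2}$ and the factor $\tau^{-k/2}$ right so that both signs of $s$ give strictly positive exponents. This is exactly where the window $U<k<U+d$ and the hypothesis $U\notin\ZZ$ are used.
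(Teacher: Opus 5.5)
Your proposal is correct and is essentially the paper's proof. You use the same normalization $\gamma=\tau^{U+d/2}$ and $\widehat\vartheta_{k,\tau}=\tau^{-k/2}\vartheta_k^{d,\gamma}$, the same positivity check of the exponents for $s$ and $-s$, and Cauchy estimates for the derivatives; the paper only differs cosmetically by discarding the quadratic term and bounding the tail by two geometric series, whereas you keep it and factor out $\tau^\delta$. One small correction: $E(s)\ge\delta|s|+c\,s^2$ with $c>0$ can fail at $|s|=1$ (e.g.\ when $k-U=\delta$), but the bound you actually use, $E(s)\ge\delta|s|+\tfrac d2|s|(|s|-1)$ for all $s\ne0$, is true and suffices.
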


\begin{proof}
The interval $(U,U+d)$ contains one integer in each residue class
modulo $d$. In Lemma~\ref{lem:theta-basis}, take
$\gamma=\tau^{U+d/2}$ and normalize the theta basis by setting
\[
 \widehat{\vartheta}_{k,\tau}(z)
 =\tau^{-k/2}\vartheta_k^{d,\gamma}(z).
\]
Substituting $z=\sqrt\tau e^w$ gives the normalized theta series
\eqref{ra:normalized-theta}.

The term with $s=0$ is $e^{kw}$. For $s\ge1$, the powers of $\tau$
in the terms $s$ and $-s$ are
\[
 s(k-U)+\frac{ds(s-1)}2,\qquad
 s(U+d-k)+\frac{ds(s-1)}2,
\]
respectively.
Both are positive. On the disc $|w|\le R$, the sum of the remaining
terms is therefore bounded by
\[
 e^{|k|R}\sum_{s\ge1}
 \left[(e^{dR}\tau^{k-U})^s+
       (e^{dR}\tau^{U+d-k})^s\right]\xrightarrow[\tau\to0]{}0.
\]
These are geometric series once $\tau$ is small.
To control derivatives, write
$g_\tau(w)=\widehat{\vartheta}_{k,\tau}(\sqrt\tau e^w)-e^{kw}$.
For $R,\rho>0$, apply the same bound on $|w|\le R+\rho$.
Cauchy's estimate on a circle of radius $\rho$ about each point
with $|w|\le R$ gives, for every $\ell\ge0$,
\begin{equation}\label{eq:theta-derivative-estimate}
 \sup_{|w|\le R}\left|\frac{\partial^\ell g_\tau}{\partial w^\ell}(w)\right|
 \le \frac{\ell!}{\rho^\ell}
       \sup_{|\zeta|\le R+\rho}|g_\tau(\zeta)|
 \xrightarrow[\tau\to0]{}0.
\end{equation}
See \cite[Chapter~2, Corollary~4.3 and Theorem~5.3]{stein-shakarchi2003}.
\end{proof}

\begin{proof}[Proof of Theorem~\ref{thm:ruled}]
Put $\alpha=K/(ph)-9>0$, as in Proposition~\ref{prop:one-point-triangle}.
We are looking for a line bundle $M_\tau$ on $C_\tau$ of degree $9$ so that
normalized global sections of $M_\tau^e$, a line bundle of degree \(9e\),
have asymptotics $e^{kw}$ for $k$ in the \(j=h-e\) column
of the triangle $S_h(\alpha)$, which requires
$
\alpha e\le k<(\alpha+9)e
$
by \eqref{eq:one-point-triangle}.
By Lemma~\ref{lem:theta-limit}, the line bundle
$
\mathcal B_{d,\tau^{U+d/2}}
$
has normalized theta functions tending to \(e^{kw}\) for the integers
$
U<k<U+d
$.
Thus we need to take
$$
M_\tau=\mathcal B_{9,\tau^{\alpha+9/2-\delta}},
$$
where the small correction \(-\delta\) is there to handle the half-open endpoints. Concretely,
write $\alpha=A/D$ in lowest terms and put
$ \delta=\frac1{2Dh}$.
For $1\le e\le h$, 
we have
$
 M_\tau^e=\mathcal B_{9e,\,\tau^{9e/2+e(\alpha-\delta)}}$.
Lemma~\ref{lem:theta-limit} applies with $U=e(\alpha-\delta)$ and degree
$9e$. Since $\alpha e\in D^{-1}\ZZ$ and $0<e\delta<1/D$, we have
\begin{equation}\label{eq:one-point-theta-interval}
 (e(\alpha-\delta),e(\alpha-\delta)+9e)\cap\ZZ
 =[\alpha e,(\alpha+9)e)\cap\ZZ.
\end{equation}
In particular, the endpoints on the left are nonintegral. By \eqref{eq:one-point-triangle},
the normalized
theta functions of $M_\tau^e$ therefore tend to $e^{kw}$ for exactly
the integers $k$ occurring in column $j=h-e$ of $S_h(\alpha)$.

We work in the fixed covering frame of $M_\tau$ and its tensor powers.
The chart $(z,v)=(\sqrt\tau e^w, e^y)$ embeds a fixed polydisk $|w|,|y|<R<\pi$ into $\Tot M_\tau$
once $|\log\tau|>2R$. Indeed, if two points $(w,y)$ and $(w',y')$ have the same image, then $w'-w=s\log\tau+2\pi it$ for integers $s,t$.
The bounds on real and imaginary parts force $s=t=0$.
The map $y\mapsto e^y$ is injective on this disk as well.

Let $\bar\Sigma_\tau=\PP_{C_\tau}(\OO\oplus M_\tau)$.
For each $1\le e\le h$, multiply the normalized theta basis of
$H^0(C_\tau,M_\tau^e)$ by $v^{h-e}$.
Together with $v^h$, these sections form a basis of
$H^0(\bar\Sigma_\tau,\OO(hC_0))$
by \eqref{eq:one-point-coefficients}.
The basis is indexed by $S_h(\alpha)$: the sections coming from
$M_\tau^e$ correspond to $(j,k)$ with $j=h-e$ and
$\alpha e\le k<(\alpha+9)e$, and $v^h$ corresponds to $(h,0)$.

In the chart $(z,v)=(\sqrt\tau e^w,e^y)$ of $\Tot(M_\tau)$, using the covering
frame of $M_\tau$ and its tensor powers,
the section indexed by $(j,k)$ converges locally to $e^{kw+jy}$.
By Lemma~\ref{lem:theta-limit} and
\eqref{eq:theta-derivative-estimate}, this convergence holds
uniformly on compact subsets, together with every derivative
in $w$.
The dependence on $y$ is entirely through the factor $e^{jy}$,
so differentiating $a$ times in $y$ simply multiplies the
local expression by $j^a$.
Thus the mixed derivative
$\partial^{a+b}/(\partial y^a\partial w^b)$,
evaluated at $(w,y)=(0,0)$, converges to $j^a k^b$.

The evaluation matrix therefore converges entrywise to
\eqref{eq:limit-matrix}, with basis sections indexed by
$S_h(\alpha)$ and derivatives indexed by $I_K^{p,q}$.
By Proposition~\ref{prop:one-point-triangle}, the limiting
matrix has a nonzero minor of size $|S_h(\alpha)|$.
This minor remains nonzero for all sufficiently small
positive $\tau$, proving
\eqref{eq:one-point-ruled-vanishing}.

The line bundles $M_\tau$ are normal bundles of plane cubics. Indeed,
the degree-three line bundle
$
 L_\tau=\mathcal B_{3,\,\tau^{3/2+(\alpha-\delta)/3}}$
 satisfies $L_\tau^3=M_\tau$
and embeds $C_\tau$ as a smooth plane cubic,
with normal bundle $L_\tau^3=M_\tau$
\cite[IV, Example~3.3.3]{hartshorne1977}; see also
\cite[Tags~0H2V and~0B3P]{stacks}. 
\end{proof}

\begin{example}[Theta series for sixteen points]\label{ex:theta-sixteen}
We return to Example~\ref{ex:one-point-square} and
Figure~\ref{fig:one-point-triangle}.
Here $(p,q)=(1,16)$, $h=4$, and $K=48$, so $\alpha=3$,
$\delta=1/8$, and $
 M_\tau=\mathcal B_{9,\tau^{59/8}}$.
We describe explicitly the basis theta function associated with
each point of $S_4(3)$.
For $1\le e\le4$, the points in column $j=4-e$ are
$(4-e,k)$ for $3e\le k<12e$.
The corresponding normalized theta functions, multiplied by
the fiber factor $v^{4-e}$, have local expressions
\[
 \sum_{t\in\ZZ}
 \tau^{t(k-23e/8)+{9e\,t(t-1)\over2}}
 e^{(k+9et)w+(4-e)y}.
\]
The term with $t=0$ is $e^{kw+(4-e)y}$, whose exponents
record the lattice point $(4-e,k)$.
Every other term has a positive power of $\tau$.
By Lemma~\ref{lem:theta-limit}, the sum of these remaining
terms tends to zero uniformly on compact subsets, together
with all derivatives.
For example, the point $(3,3)$ corresponds to $e=1$ and $k=3$.
Displaying the terms with $t=0,1,-1$, we obtain
\[
 e^{3w+3y}
 +\tau^{1/8}e^{12w+3y}
 +\tau^{71/8}e^{-6w+3y}
 +\cdots
 \longrightarrow e^{3w+3y}.
\]
\end{example}

\section{Proof of the Nagata Conjecture}
\label{sec:plane-transfer}

We first transfer Theorem~\ref{thm:ruled} from the ruled surface to the plane. 
We fix
a smooth cubic for which Theorem~\ref{thm:ruled} holds and then degenerate
the plane to its normal cone. 

\begin{proposition}\label{prop:one-point-transfer}
Fix positive integers $p,q,h,K$. Let $C\subset\PP^2$ be a smooth cubic
with normal bundle~$M$, and let
$\bar\Sigma=\PP_C(\OO_C\oplus M)$. Suppose
\[
 H^0\!\left(\bar\Sigma,\OO(hC_0)\otimes
                      \mathfrak J_K^{p,q}(w,y)\right)=0
\]
for local holomorphic coordinates at a point of $\Tot(M)$.
Then there are regular local parameters $(u,v)$ at a point of
$\PP^2$ for which
\begin{equation}\label{eq:one-point-plane-vanishing}
 H^0\!\left(\PP^2,\OO(3h)\otimes\mathfrak J_K^{p,q}(u,v)\right)=0.
\end{equation}
\end{proposition}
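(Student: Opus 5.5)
We use the degeneration of $\PP^2$ to the normal cone of $C$ and upper semicontinuity of $h^0$. Let $\mathcal X\to\mathbb A^1$ be the blow-up of $\PP^2\times\mathbb A^1$ along $C\times\{0\}$. The general fiber is $\PP^2$. The special fiber is $\mathcal X_0=P\cup\bar\Sigma$, where $P\cong\PP^2$ is the strict transform of $\PP^2\times 0$, and $\bar\Sigma=\PP_C(\OO_C\oplus M)$ is the exceptional divisor. The two components meet along $C$, which sits in $P$ as the cubic and in $\bar\Sigma$ as $C_\infty$; the section $C_0$ is disjoint from this double curve, and $\Tot(M)=\bar\Sigma\setminus C_\infty$ is disjoint from $P$.

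Next we choose the line bundle. On $\mathcal X$ take $\mathcal L=\mathrm{pr}^*\OO(3h)\otimes\OO(-hE)$, where $E=\bar\Sigma$ is the exceptional divisor. Then:
\begin{itemize}
\item On a general fiber, $\mathcal L$ restricts to $\OO(3h)$.
\item On $P$, it restricts to $\OO_{\PP^2}(3h)\otimes\OO(-hC)=\OO_{\PP^2}$.
\item On $\bar\Sigma$, it restricts to $\pi^*M^h\otimes\OO_E(-hE)$. Since $\OO_E(E)=\OO_{\bar\Sigma}(-C_\infty)$, this equals $\pi^*M^h(hC_\infty)\cong\OO(hC_0)$ by \eqref{eq:normal-sections}.
\end{itemize}
These two restrictions glue along $C$: both are trivial there, since $C_0\cap C_\infty=\emptyset$.

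Then we spread out the scheme. The point $x\in\Tot(M)$ is a smooth point of $\mathcal X_0$, and $\mathcal X\to\mathbb A^1$ is smooth there. Hence we can choose local holomorphic coordinates $(w,y,t)$ on $\mathcal X$ near $x$, with $t$ the base parameter, restricting to the given $(w,y)$ on the special fiber. Over a small disk, $\mathfrak J_K^{p,q}(w,y)$ defines a family of finite schemes $\mathcal Z$, flat over the $t$-disk with constant length $|I_K^{p,q}|$. Polynomial approximation of $w$ and $y$ to high order at $x$ does not change the ideal, since the ideal contains a power of the maximal ideal. So we may assume the coordinates are algebraic, and on a fiber $t\ne0$ they restrict to regular local parameters $(u,v)$ at a point of $\PP^2$ with the same monomial ideal $\mathfrak J_K^{p,q}(u,v)$.

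Finally, semicontinuity. Sections of $\mathcal L\otimes\cI_{\mathcal Z}$ on the special fiber are compatible pairs: a section on $P$ and a section on $\bar\Sigma$ vanishing on $Z$ that agree on $C$. The component on $\bar\Sigma$ vanishes by hypothesis. It therefore vanishes on $C_\infty$, so the constant section on $P\cong\PP^2$ restricts to zero on $C$ and is zero. Thus $H^0(\mathcal X_0,\mathcal L_0\otimes\cI_{Z})=0$. Since $\mathcal X$ is projective and flat over the base and $\mathcal L\otimes\cI_{\mathcal Z}$ is flat (because $\mathcal Z$ is flat), upper semicontinuity gives vanishing for nearby $t$. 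This is \eqref{eq:one-point-plane-vanishing}.

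The main obstacle is the comparison between holomorphic and algebraic coordinates, together with the flatness of the twisted ideal sheaf. The approach is as above: approximate $(w,y)$ by polynomials modulo a high power of the maximal ideal so that $\mathcal Z$ is an algebraic subscheme of $\mathcal X$. Then $\cI_{\mathcal Z}$ is flat over the base because $\OO_{\mathcal Z}$ is finite and free over it, and semicontinuity applies.
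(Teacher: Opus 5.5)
Your degeneration to the normal cone, the twist $\mathcal L=\mathrm{pr}^*\OO(3h)\otimes\OO(-hE)$ with its two restrictions, the central-fibre argument and the appeal to semicontinuity are the same as in the paper's proof. The gap is in the step you single out as the main obstacle. Once $w,y$ are replaced by algebraic functions $u,v$ on a Zariski neighbourhood $\mathcal U$ of $x$, let $\Gamma$ be the component of $\{u=v=0\}$ through $x$. It is smooth at $x$ and \'etale over $\mathbb A^1$ there, but it is not a section. In general it meets each nearby fibre $\mathcal X_t$ in several points, and every Zariski open set containing $x$ contains all but finitely many points of $\Gamma$. Moreover, $V(u,v)\cap\mathcal U$ may have further components. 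So the subscheme of $\mathcal U$ cut out by the monomials $u^bv^a$ with $pb+qa\ge K$ is not closed in $\mathcal X$ and not finite over $\mathbb A^1$, and its fibres over $t\ne0$ contain extra points. This matters: vanishing for sections that vanish on a larger scheme is weaker than what you need at the single point of $\Gamma\cap\mathcal X_t$ near $x$. Your claim that $\OO_{\mathcal Z}$ is finite and free over the base holds only over a small analytic disc, where $(u,v,t)$ are holomorphic coordinates and the branch of $\Gamma$ through $x$ is a genuine section.

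You can close this in either of two ways.

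\emph{Stay analytic.} Over a small disc $\Delta$, $\mathcal Z$ is a closed analytic subspace of $\mathcal X|_\Delta$, finite and free over $\Delta$. Grauert's semicontinuity theorem for the proper flat family $\mathcal X|_\Delta\to\Delta$ then gives vanishing for small $t\ne0$. GAGA on $\mathcal X_t\simeq\PP^2$ converts this into the algebraic statement, since $u|_{\mathcal X_t}$ and $v|_{\mathcal X_t}$ are regular local parameters at the point of $\Gamma\cap\mathcal X_t$ near $x$.

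\emph{Follow the paper.} Algebraize $(w,y)$ on $\Tot(M)$ first. Make an \'etale change of the parameter curve so that $x$ extends to an algebraic section of the smooth morphism $\mathcal X\setminus P\to\mathbb A^1$. Lift $u,v$ to relative parameters vanishing along that section. Then algebraic semicontinuity (Hartshorne III.12.8) applies to a genuinely finite flat $\mathcal Z$. The \'etale base change is exactly what separates the branch of $\Gamma$ through $x$ from the other points of $\Gamma$ over the same $t$. Your holomorphic coordinates $(w,y,t)$ supply this separation only locally.
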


\begin{proof}
We use a standard deformation to the normal cone
\cite[\S5.1]{fulton1998}; see also
\cite[II, Theorem~8.24]{hartshorne1977} and
\cite[Tag~0FBG]{stacks}. See the construction in Figure~\ref{fig:normal-cone}.
We blow up $C\times\{0\}$ in $\PP^2\times\mathbb A^1$.
Write $\mathcal Y$ for the resulting threefold, $s$ for the parameter on $\mathbb A^1$,
$\rho:\mathcal Y\to\PP^2$ for the projection, and $\mathcal E$
for the exceptional divisor. The central fiber of $\mathcal Y$ is
$\mathcal Q\cup\mathcal E$, where
\[
 \mathcal Q\simeq\PP^2,\qquad
 \mathcal E\simeq\bar\Sigma,\qquad
 \mathcal Q\cap\mathcal E=C_\infty.
\]
\begin{figure}[htbp]
\centering
\begin{tikzpicture}[x=1cm,y=1cm,font=\small,>=Stealth]
  \node at (3.05,5.95) {\textbf{General fiber: $s\ne0$}};
  \node at (11.35,5.95) {\textbf{Central fiber: $s=0$}};

  \filldraw[fill=diagGray!7,draw=diagGray!65]
    (.2,1.65)--(4.65,1.65)--(5.9,4.05)--(1.45,4.05)--cycle;
  \node at (3.05,1.20) {$\PP^2$};
  \draw[diagBlue,line width=1.1pt]
    (1.3,2.55) arc[start angle=180,end angle=0,
      x radius=1.75,y radius=.62];
  \node[text=diagBlue,inner sep=1pt] at (4.4,2.65) {$C$};
  \coordinate (basepoint) at ({3.05+1.75*cos(120)},{2.55+.62*sin(120)});
  \coordinate (p) at ($(basepoint)+(0,.56)$);
  \fill[diagBlue] (basepoint) circle (2pt);
  \fill[diagOrange] (p) circle (2.2pt);
  \draw[->,diagOrange,line width=.8pt,shorten <=4pt,shorten >=4pt]
    (basepoint)--(p);
  \node[anchor=north,text=diagBlue] at ($(basepoint)+(0,-.15)$) {$x_0$};
  \node[anchor=west,text=diagOrange] at ($(p)+(.15,0)$) {$x(s)$};

  \draw[->,line width=1pt,diagGray] (6.0,3.0)--(8.35,3.0);
  \node[align=center,text width=2.4cm] at (7.18,3.75)
    {Blow up\\$C\times\{0\}$};

  \filldraw[fill=diagGray!8,draw=diagGray!65]
    (8.5,1.65)--(12.95,1.65)--(14.2,4.05)--(9.75,4.05)--cycle;
  \filldraw[fill=diagBlue!5,draw=diagBlue!50]
    (9.6,2.55) arc[start angle=180,end angle=0,
      x radius=1.75,y radius=.62]
    --(13.1,4.7) arc[start angle=0,end angle=180,
      x radius=1.75,y radius=.62]--cycle;
  \foreach \angle in {150,120,90,60,30}
    {\draw[diagBlue!35]
      ({11.35+1.75*cos(\angle)},{2.55+.62*sin(\angle)})
      --++(0,2.15);}
  \draw[diagBlue,line width=1.4pt]
    (9.6,2.55) arc[start angle=180,end angle=0,
      x radius=1.75,y radius=.62];
  \node[fill=diagBlue!5,inner sep=3pt] at (11.35,4.92)
    {$\mathcal E=\bar\Sigma$};
  \node at (11.35,1.20) {$\mathcal Q\simeq\PP^2$};
  \node[text=diagBlue,fill=diagGray!8,inner sep=1pt] at (11.35,2.35)
    {$\mathcal Q\cap\mathcal E=C_\infty$};
  \coordinate (x) at ($(basepoint)+(8.3,.98)$);
  \fill[diagOrange] (x) circle (2.2pt);
  \node[anchor=west,text=diagOrange,fill=diagBlue!5,inner sep=2pt]
    at ($(x)+(.16,0)$) {$x=(x_0,c)$};
\end{tikzpicture}
\caption{Deformation to the normal cone of the cubic.}
\label{fig:normal-cone}
\end{figure}

The weighted ideal has finite colength in the holomorphic local ring of $\Tot(M)$.
Choose $N$ such that it
contains $\mathfrak m^{N+1}$, where $\mathfrak m$ is the maximal
ideal of its support. Replace $(w,y)$ by algebraic regular parameters
$(u,v)$ with the same $N$-jet. Their monomials agree modulo
$\mathfrak m^{N+1}$, so they define the same weighted ideal in the holomorphic local ring of $\Tot(M)$.

The morphism $\mathcal Y\setminus\mathcal Q\to\mathbb A^1$ is flat and has smooth fibers.
Therefore, it is smooth.
After an \'etale change of the parameter curve, extend the point of $\Tot(M)$ to
a section of $\mathcal Y\setminus\mathcal Q\to\mathbb A^1$ \cite[Tag~057G]{stacks}.
Lift $u,v$ to regular functions near the section and subtract their
restrictions to it to make them vanish along the section. Their relative differentials are linearly independent
at the original point and hence remain linearly independent along the section after shrinking. They are
therefore relative local parameters along the section. The ideal
$(u^b v^a:pb+qa\ge K)$ defines a finite flat scheme
$\mathcal Z$ supported on this section: its quotient has the fixed
basis $u^b v^a$, $(a,b)\in I_K^{p,q}$.
Its ideal sheaf is flat as well, by the exact sequence
$0\to\mathcal I_{\mathcal Z}\to\OO_{\mathcal Y}\to
\OO_{\mathcal Z}\to0$.

Put $\mathcal A=\rho^*\OO(3h)(-h\mathcal E)$.
Since $\mathcal E|_{\mathcal Q}=C$ and
$\mathcal E+\mathcal Q$ is the divisor of $s$, we obtain
\begin{equation}\label{eq:one-point-twist}
 \mathcal A|_{\mathcal Q}=\OO_{\PP^2},\qquad
 \mathcal A|_{\mathcal E}
 =\OO_{\bar\Sigma}(hC_\infty)\otimes\pi^*M^h
 =\OO_{\bar\Sigma}(hC_0).
\end{equation}
Both restrictions 
$ \mathcal A|_{\mathcal Q}$ and $\mathcal A|_{\mathcal E}$
are trivial on $C_\infty$. A global section of $\mathcal A$ on
$\mathcal Q\cup\mathcal E$ is a pair of sections agreeing along $C_\infty$.
Its restriction to $\mathcal Q$ is a constant, uniquely determined
by the restriction of its $\mathcal E$-part to $C_\infty$.
Since $\mathcal Z_0$ is disjoint from $\mathcal Q$, it follows that
\[
 H^0\!\left(\mathcal Y_0,
    \mathcal A|_{\mathcal Y_0}\otimes\mathcal I_{\mathcal Z_0}\right)
 \simeq
 H^0\!\left(\bar\Sigma,\OO(hC_0)\otimes
                         \mathcal I_{\mathcal Z_0}\right)=0.
\]
Upper semicontinuity \cite[III, Theorem~12.8]{hartshorne1977} gives
the same vanishing on nearby fibers. There the surface is $\PP^2$,
and the vertical twist $\cO(-h\mathcal E)$ restricts trivially. This proves
\eqref{eq:one-point-plane-vanishing}.
\end{proof}

The use of a vertical twist is related to the degeneration method
of Ciliberto--Miranda \cite[\S2]{cm1998}, who blow up a line in the
central plane. Here the cubic gives an elliptic ruled surface.

\begin{proof}[Proof of Theorem~\ref{thm:plane}]
First, let $d=3h$ and suppose $9h^2\le rm^2$, with $r\ge10$.
Apply Theorem~\ref{thm:ruled} and Proposition~\ref{prop:one-point-transfer}
with
\[
 p=1,\qquad q=r,\qquad K=rm\ge3h\sqrt r.
\]
They give local parameters on $\PP^2$ for which
\[
 H^0\!\left(\PP^2,\OO(3h)\otimes(u^r,v)^m\right)=0,
\]
using \eqref{eq:one-point-nagata-ideal}.
Lemma~\ref{lem:collision-open} deforms this scheme into $r$ distinct
multiplicity-$m$ fat points. Injectivity of the evaluation matrix persists, so no degree-$3h$
form vanishes to order at least $m$ at $r$ general points.

For arbitrary positive integers $d,m$ with $d\le m\sqrt r$,
apply this result with $h=d$ and multiplicity $3m$.
If a form of degree $d$ vanished to order at least $m$ at every point,
its cube would be a form of degree $3d$ of multiplicity
at least $3m$, which was just ruled out.
\end{proof}

\begin{example}[Ten and sixteen points]\label{ex:one-point-nagata}
Take $r=10$, $m=3$, and $d=9$, so $h=3$, $K=30$, and $\alpha=1$.
The columns of $S_3(1)$ are
\[
 \begin{array}{c|cccc}
 j&0&1&2&3\\\hline
 k&3,\ldots,29&2,\ldots,19&1,\ldots,9&0.
 \end{array}
\]
They contain $55$ points, matching the dimension of the space of forms of degree $9$.
The staircase $I_{30}^{1,10}$ has column lengths $30,20,10$, so its length
is $60$, the number of conditions imposed by ten triple points.
It follows that no curve of degree $9$ passes through ten general triple points.

Example~\ref{ex:one-point-square} and
Figure~\ref{fig:one-point-triangle} give the case
$r=16$, $m=3$, $d=12$: here $K=rm=48$ and $h=4$.
The argument excludes curves of degree $12$ through sixteen
general triple points. 
\end{example}

\section{Monomial valuations and weighted blow-ups}
\label{sec:one-point}

Throughout this section, $p$ and $q$ are coprime positive integers with $q>9p$.
Fix a point $x\in\PP^2$ and regular local parameters $(u,v)$ at $x$.
The corresponding \emph{monomial valuation} is
\begin{equation}\label{eq:one-point-valuation}
 \nu\!\left(\sum_{a,b\ge0}c_{ab}u^b v^a\right)
   =\min_{c_{ab}\ne0}(pb+qa).
\end{equation}

For a section of $\OO_{\PP^2}(d)$, use any local frame at $x$.
Changing the frame multiplies the local equation by a unit and does not
change its valuation. Put
\begin{equation}\label{adgnsfgnsfgnsfg}
 \widehat\mu(\nu)=
 \sup_{0\ne F\in H^0(\PP^2,\OO(d))\atop d\ge1}\frac{\nu(F)}d.
\end{equation}

\begin{maintheorem}\label{thm:one-point}
For a very general choice of local parameters at the fixed point $x$,
\begin{equation}\label{eq:one-point-strict}
 \nu(F)<d\sqrt{pq}
 \qquad\text{for every }d\ge1\text{ and }0\ne F\in H^0(\PP^2,\OO(d)).
\end{equation}
Nevertheless, 
\begin{equation}\label{adgsdtgsthethw}
\widehat\mu(\nu)=\sqrt{pq}.
\end{equation}
Thus the supremum in \eqref{adgnsfgnsfgnsfg} is not
attained, including when $pq$ is a square.
\end{maintheorem}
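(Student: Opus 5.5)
The proof splits into the strict inequality \eqref{eq:one-point-strict}, which follows from Theorem~\ref{thm:ruled} and Proposition~\ref{prop:one-point-transfer}, and the equality \eqref{adgsdtgsthethw}, which is a dimension count.

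\textbf{Strict inequality.} Since $\nu(F)$ is an integer, it suffices to rule out $\nu(F)\ge K$ with $K=\lceil d\sqrt{pq}\rceil$. The ruled-surface machinery only handles degrees $3h$, so first reduce to those degrees. If $F$ has degree $d$, then $F^3$ has degree $3d$ and $\nu(F^3)=3\nu(F)$, because a monomial valuation is a valuation. So if $\nu(F)\ge d\sqrt{pq}$ for some $F$ of degree $d$, put $h=d$ and $K'=3\nu(F)$. Then $K'\ge 3h\sqrt{pq}$, and $F^3$ is a nonzero section of $\OO(3h)\otimes\mathfrak J_{K'}^{p,q}(u,v)$.

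Theorem~\ref{thm:ruled} together with Proposition~\ref{prop:one-point-transfer} supplies local parameters for which this space vanishes, but only for one fixed pair $(h,K')$. To pass to very general parameters, consider the parameter space of local parameters at the fixed point $x$, for instance the pairs of $N$-jets, made large enough for each pair $(h,K')$. For each $(h,K')$, the locus where $H^0(\PP^2,\OO(3h)\otimes\mathfrak J_{K'}^{p,q})\ne0$ is Zariski closed, by the rank-of-a-matrix argument used in Lemma~\ref{lem:collision}.

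Proposition~\ref{prop:one-point-transfer} produces the vanishing at \emph{some} point of $\PP^2$, whereas the statement fixes $x$. Since $PGL_3$ acts transitively on $\PP^2$, we can move that point to $x$, and the parameters transform accordingly. Hence the closed locus is proper. Intersecting the complements over the countably many pairs $(h,K')$ with $K'\ge3h\sqrt{pq}$ gives a very general set of parameters on which \eqref{eq:one-point-strict} holds. It is enough to check $K'$ up to $\lceil 3h\sqrt{pq}\rceil$, since the ideals shrink as $K'$ grows.

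\textbf{The equality $\widehat\mu(\nu)=\sqrt{pq}$.} The upper bound $\le\sqrt{pq}$ is immediate from \eqref{eq:one-point-strict}. For the lower bound, count dimensions. The ideal $\mathfrak J_K^{p,q}$ has colength $|I_K^{p,q}|$, and this is approximately $K^2/(2pq)$: it counts the lattice points in a triangle with legs $K/p$ and $K/q$. The space $H^0(\OO(d))$ has dimension $\binom{d+2}{2}\approx d^2/2$.

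Take $K=\lfloor d(\sqrt{pq}-\epsilon)\rfloor$. For large $d$ we then have $\binom{d+2}2>|I_K^{p,q}|$, so there is a nonzero $F$ of degree $d$ with $\nu(F)\ge K$. This gives $\widehat\mu(\nu)\ge\sqrt{pq}-\epsilon$ for every $\epsilon>0$.

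To make the count rigorous, bound $|I_K^{p,q}|=\sum_{t}A_t$ using \eqref{eq:one-point-column-length}. Each $A_t$ is at most $(K-qt)/p+1$, and there are at most $K/q+1$ nonzero columns. This gives $|I_K^{p,q}|\le K^2/(2pq)+O(K)$.

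Combining the two parts, the supremum equals $\sqrt{pq}$, while every individual ratio is strictly smaller, so the supremum is not attained. This holds in particular when $pq$ is a square.

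\textbf{Main obstacle.} The delicate step is the passage from the existence statement in Proposition~\ref{prop:one-point-transfer}, which gives some parameters at some point, to very general parameters at the fixed point $x$. This needs a parameter space that is irreducible, namely an affine space of jets at $x$, together with closedness of the nonvanishing locus. Closedness follows from semicontinuity, since the ideal depends algebraically on the jets.
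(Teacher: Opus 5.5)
Your proposal is correct and follows the paper's proof essentially step by step. You cube $F$ to reach degree $3h$, and you use Theorem~\ref{thm:ruled}, Proposition~\ref{prop:one-point-transfer} with $K_h=\lceil3h\sqrt{pq}\rceil$, and a projective automorphism to make each noninjectivity locus a proper closed set. You then intersect the complements over $h$ and get $\widehat\mu(\nu)\ge\sqrt{pq}$ from the colength count $K^2/(2pq)+O(K)$.

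The paper adds one refinement. It fixes a single parameter space, the $J$-jets with $J=\lfloor q/p\rfloor$, instead of letting the jet order grow with $(h,K')$. These jets already determine every valuation ideal, since changes modulo $\mathfrak m_x^{J+1}$ have weight at least $p(J+1)>q$. This makes the countable intersection of complements take place in one irreducible finite-dimensional variety, which is what ``very general'' needs.
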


If one normalizes the valuation to $\nu/p$ and sets $t=q/p$, the equality \eqref{adgsdtgsthethw}
becomes
$\widehat\mu(\nu/p)=\sqrt t$. This proves the rational-weight case with $t>9$ of the valuative Nagata
conjecture of Dumnicki--Harbourne--K\"uronya--Ro\'e--Szemberg
\cite[Conjecture~4.3]{dhkrs2017}. Their stronger conjecture has the threshold
$t\ge8+1/36$ \cite[Conjecture~2.4]{dhkrs2017}.
For recent equivalent formulations in the broader setting of plane
valuations, see \cite[Conjecture~C and Corollary~3.14]{galindo2025}.
The relation with Nagata's conjecture through collisions of fat
points already appears in \cite[Proposition~4.4]{dhkrs2017}.

It is important to emphasize that the local parameters need to be very general.
For example, in affine linear coordinates the line $v=0$ has valuation
$q>\sqrt{pq}$. To specify a finite-dimensional parameter space, let $\mathfrak m_x$ be
the maximal ideal at $x$ and put $J=\lfloor q/p\rfloor$.
The pairs $(u,v)\in(\mathfrak m_x/\mathfrak m_x^{J+1})^2$ with independent
linear terms form an irreducible open subset, denoted $\mathcal J_J(x)$.
This finite jet determines the valuation in every degree. Indeed,
changing the parameters modulo $\mathfrak m_x^{J+1}$ adds terms of
valuation at least $p(J+1)>q$. The change of parameters therefore
induces the identity on the associated graded ring of the weighted
filtration. In particular, every valuation ideal is unchanged.
We use ``very general'' in this fixed finite-dimensional parameter space;
compare \cite[Remark~2.3]{dhkrs2017}.

\begin{proof}[Proof of Theorem~\ref{thm:one-point}]
For each $h\ge1$, put $K_h=\lceil3h\sqrt{pq}\rceil$.
Theorem~\ref{thm:ruled} and
Proposition~\ref{prop:one-point-transfer} give local parameters on
$\PP^2$ for which the restriction of $H^0(\OO(3h))$ to
$\OO/\mathfrak J_{K_h}^{p,q}$ is injective. A projective automorphism moves
the support of this finite scheme to the fixed point $x$.

For fixed $d,K$, failure of injectivity is a closed condition on
$\mathcal J_J(x)$. Indeed, choose affine coordinates at $x$ and
polynomial representatives of the parameter jets. The coefficients
of the inverse formal change of coordinates, to any fixed order, are
regular functions of these coefficients and the inverse Jacobian
determinant. Expanding the degree-$d$ forms and retaining the terms
indexed by $I_K^{p,q}$ gives a matrix of regular functions. Its maximal
minors define the noninjectivity locus. The valuation ideals are
independent of the chosen representatives.

For $(d,K)=(3h,K_h)$, the local parameters just constructed show that
this closed locus is proper. Choose a jet outside the union of these loci over all $h\ge1$. If a nonzero degree-$d$ form had $\nu(F)\ge d\sqrt{pq}$, then
\[
 \nu(F^3)=3\nu(F)\ge3d\sqrt{pq},\qquad \deg(F^3)=3d.
\]
Since the valuation is integral, $\nu(F^3)\ge K_d$, contradicting
injectivity in degree $3d$. This proves \eqref{eq:one-point-strict}.

For the reverse asymptotic inequality, the length of the weighted
scheme is
\begin{equation}\label{eq:one-point-length}
 \length(\OO/\mathfrak J_K^{p,q})=\sum_{a\ge0}A_a
             =\frac{K^2}{2pq}+O(K),
\end{equation}
where 
$A_a=\max\left\{0,\left\lceil\frac{K-qa}{p}\right\rceil\right\}$ by \eqref{eq:one-point-column-length}.
The estimate follows by summing $(K-qa)/p$ for $0\le a<K/q$; replacing
each term by its ceiling changes the sum by $O(K)$.
For $0<c<\sqrt{pq}$ and $K=\lceil cd\rceil$, the length \eqref{eq:one-point-length} is less than
$h^0(\PP^2,\OO(d))=\binom{d+2}{2}$ for large $d$. Hence some nonzero
degree-$d$ form has valuation at least $K$ for every choice of parameters.
Thus $\widehat\mu(\nu)\ge c$ for every $c<\sqrt{pq}$, proving the
equality claimed in the theorem.
\end{proof}

\begin{example}\label{ex:one-point-contact}
For the weights $(p,q)=(1,10)$ and any $m\ge1$,
\[
 \mathfrak J_{10m}^{1,10}=(u^{10},v)^m,\qquad
 \length(\OO/\mathfrak J_{10m}^{1,10})=5m(m+1).
\]
Writing $F=\sum_{i\ge0}f_i(u)v^i$, we can express these conditions as
$\ord_u f_i\ge10(m-i)$ for $0\le i<m$.

Theorem~\ref{thm:one-point} gives $d>m\sqrt{10}$ for every nonzero
degree-$d$ form satisfying these conditions for very general local parameters.
For $m=3$, this is the ideal in
Example~\ref{ex:one-point-nonsquare}: degree $9$ is excluded,
whereas in degree $10$, the dimension of the space of global forms is $66$, while  there are only $60$ linear conditions.
For $m=5$, degree $15$ is excluded, whereas in degree $16$ the dimension of the space of global forms is
$153$, while there are only $150$ linear conditions.
Thus the least possible degrees in these two cases are $10$ and $16$,
respectively. 
\end{example}

\begin{example}\label{ex:one-point-rational-square}
Take $(p,q)=(4,49)$, so that $q/p=(7/2)^2>9$ and $\sqrt{pq}=14$.
The ideal
\[
 \mathfrak J_{196}^{4,49}
 =(u^{49},u^{37}v,u^{25}v^2,u^{13}v^3,v^4)
\]
has colength $49+37+25+13=124$.
For very general parameters, Theorem~\ref{thm:one-point} excludes
a nonzero form of degree at most $14$ in this ideal.
Degree-fifteen forms, however, have dimension $136>124$.
Hence the least degree satisfying these vanishing conditions is
exactly fifteen.

In particular, no curve of degree $14$ has a local equation
$u^{49}-v^4$ plus terms of higher $(4,49)$-weighted degree
in these prescribed very general parameters.
The model cusp $u^{49}=v^4$ has normalization
$u=t^4$, $v=t^{49}$.
Its delta invariant is the number of gaps in the semigroup
$\langle4,49\rangle$, namely $12+24+36=72$.
This is smaller than the arithmetic genus
$(14-1)(14-2)/2=78$ of a degree-fourteen plane curve.
Thus the genus bound alone does not explain the exclusion.
\end{example}

\paragraph{The weighted blow-up.}
Let $\pi:Y\to\PP^2$ be the normalization of the blow-up of the ideal sheaf
$(u^q,v^p)$ (extended by the unit ideal away from $x$). This is the
$(p,q)$-weighted blow-up of $\PP^2$ with respect to the local parameters $(u,v)$. It is a projective surface with cyclic
quotient singularities and one prime exceptional divisor $E$.
Write $H$ for the pullback of the line class. We have
\begin{equation}\label{eq:one-point-intersections}
 H^2=1,\qquad H\cdot E=0,\qquad E^2=-\frac1{pq}.
\end{equation}
See, for example, 
\cite[Theorem~4.3]{artal2014}.
Every nonexceptional prime divisor of $Y$ is the strict transform of a plane
curve $F=0$ of degree $d$, with class $dH-\nu(F)E$. Put
\[
 D=H-\sqrt{pq}\,E.
\]

\begin{corollary}\label{cor:one-point-cones}
For the very general parameters in Theorem~\ref{thm:one-point},
\begin{equation}\label{eq:one-point-cones}
 \begin{split}
 \operatorname{Nef}(Y)&=\RR_{\ge0}H+\RR_{\ge0}D,\\
 \overline{\operatorname{Eff}}(Y)&=\RR_{\ge0}E+\RR_{\ge0}D.
 \end{split}
\end{equation}
The class $D$ is strictly nef and has square zero. No nonzero
effective $\RR$-divisor has a class on its ray. The Cox ring of $Y$
is not finitely generated. 
\end{corollary}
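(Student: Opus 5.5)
We derive everything from Theorem~\ref{thm:one-point} together with the intersection numbers \eqref{eq:one-point-intersections}. Since $Y$ has Picard number two (the class group is generated by $H$ and $E$ over $\QQ$-coefficients, and $\QQ$-factoriality holds for quotient singularities), both cones are two-dimensional and each is determined by its two boundary rays. We have $D^2=1-pq\cdot\frac1{pq}=0$, $D\cdot H=1>0$ and $D\cdot E=\sqrt{pq}/pq>0$. The plan is first to show that $D$ is nef, then to read off both cones from $D^2=0$, and finally to prove the strictness statements and the failure of finite generation.

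\emph{Nefness of $D$.} Consider an irreducible curve $\Gamma\subset Y$. If $\Gamma=E$, then $D\cdot E>0$. Otherwise $\Gamma$ is the strict transform of a plane curve $F=0$ of degree $d$, with class $dH-\nu(F)E$. In that case
\[
 D\cdot\Gamma=d-\sqrt{pq}\cdot\frac{\nu(F)}{pq}\cdot pq\cdot\frac1{pq}\cdot\sqrt{pq}\,,
\]
which simplifies to $D\cdot\Gamma=d-\nu(F)/\sqrt{pq}$. By \eqref{eq:one-point-strict} this quantity is strictly positive. Hence $D$ is strictly nef: it is positive on every curve. The same computation shows that $H$ is nef, being a pullback, with $H\cdot E=0$. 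So $H$ spans a boundary ray of the nef cone.

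\emph{Cones.} A nef class $xH-yE$ must satisfy $(xH-yE)^2\ge0$, so $y\le x\sqrt{pq}$. It must also have nonnegative intersection with $E$, which gives $y\ge0$. Hence the nef cone is contained in the cone spanned by $H$ and $D$, and it contains that cone because both $H$ and $D$ are nef. The pseudoeffective cone is dual to the nef cone. It is therefore cut out by $\cdot H\ge0$ and $\cdot D\ge0$, whose boundary rays are $E$ (since $E\cdot H=0$) and $D$ (since $D\cdot D=0$). This gives \eqref{eq:one-point-cones}. The lower bound that $D$ is big-limit effective can also be seen directly from \eqref{adgsdtgsthethw}.

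\emph{No effective divisor on the ray, and non-finite generation.} Suppose $\Delta=\sum a_i\Gamma_i$ is a nonzero effective $\RR$-divisor whose class is $\lambda D$ with $\lambda>0$. If some $\Gamma_i$ is nonexceptional, then $D\cdot\Delta\ge a_iD\cdot\Gamma_i>0$, contradicting $D\cdot\lambda D=0$. So $\Delta=aE$; but $E$ is not proportional to $D$, a contradiction. For the Cox ring, recall that if it were finitely generated (a Mori dream space), then every nef divisor would be semiample and the effective cone would be rational polyhedral and generated by effective classes. In that situation the boundary ray $D$ of $\overline{\operatorname{Eff}}$ would contain an effective divisor, which we have just ruled out. (Alternatively: semiampleness of a multiple of $D$ would yield a section in $|mD|$, again impossible; and $D$ may even be irrational.) The main subtle point is the case where $pq$ is a square, so that $D$ is rational. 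Here we must use exactly that the supremum in Theorem~\ref{thm:one-point} is not attained. This is the input that \eqref{eq:one-point-strict} supplies, so no further obstacle is expected.
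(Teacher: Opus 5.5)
Your proof is correct. Three of its four parts coincide with the paper's: strict nefness of $D$ from \eqref{eq:one-point-strict}, the exclusion of effective $\RR$-divisors on the ray of $D$ via $D\cdot\Delta>0$, and the Cox ring argument (a cone generated by finitely many effective classes carries an effective class on each extremal ray).

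The difference is in how you obtain the two cones. The paper works on the effective side. Every effective divisor is a combination of $E$ and strict transforms with $\nu(F)/d<\sqrt{pq}$. The dimension count behind $\widehat\mu(\nu)=\sqrt{pq}$ in \eqref{adgsdtgsthethw} then produces effective classes with ratios approaching $\sqrt{pq}$, so $\overline{\operatorname{Eff}}(Y)$ reaches the ray of $D$. You instead bound $\operatorname{Nef}(Y)$ from outside by the general inequality $N^2\ge0$ for nef classes, and then take duals. Curves and divisors coincide on the $\mathbb{Q}$-factorial surface $Y$, so $\overline{\operatorname{Eff}}(Y)=\operatorname{Nef}(Y)^\vee$ is formal biduality.

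Consequently you never use \eqref{adgsdtgsthethw} explicitly. The existence of curves with $\nu(F)/d\to\sqrt{pq}$ is hidden inside $N^2\ge0$, whose proof on surfaces rests on the same Riemann--Roch count. The paper's route is more hands-on and self-contained. Yours is shorter given standard cone theory, and it makes the ``only if'' half of the nef description explicit.

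Two small repairs are needed.
\begin{itemize}
\item Your displayed intermediate expression for $D\cdot\Gamma$ multiplies out to $d-\nu(F)$. The correct computation is $(H-\sqrt{pq}E)\cdot(dH-\nu(F)E)=d+\sqrt{pq}\,\nu(F)E^2=d-\nu(F)/\sqrt{pq}$, which is the value you go on to use.
\item The step from $(xH-yE)^2\ge0$ to $y\le x\sqrt{pq}$ needs $x\ge0$. The class $-H$ satisfies both $N^2\ge0$ and $N\cdot E\ge0$ but is not nef. Add $x=N\cdot H\ge0$, which holds because $H$ is effective.
\end{itemize}
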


\begin{proof}
Theorem~\ref{thm:one-point} and \eqref{eq:one-point-intersections} give
\[
 D^2=0,\qquad D\cdot E=\frac1{\sqrt{pq}}>0,\qquad
 D\cdot\widetilde C=d-\frac{\nu(F)}{\sqrt{pq}}>0.
\]
Thus $D$ has positive intersection with every prime curve.
In particular, no nonzero effective $\RR$-divisor can have a class on
its ray. A class $xH-yE$ is nef precisely when $y\ge0$ and
$x\ge y/\sqrt{pq}$. This proves the first formula.
Every effective divisor is a sum of $E$ and strict transforms with
$\nu(F)/d<\sqrt{pq}$. The dimension count in the proof of
Theorem~\ref{thm:one-point} gives effective classes whose ratios
approach $\sqrt{pq}$, proving the second formula.

The divisor class group is generated by $H$ and $E$.
If the Cox ring were finitely generated, the
classes of finitely many homogeneous generators would generate the
closed effective cone. Each extremal ray would then contain a
nonzero effective class, contradicting the assertion about $D$.
\end{proof}

\end{document}